\theoremstyle{plain}
\newtheorem{theorem}{Theorem}
\newtheorem{proposition}{Proposition}
\newtheorem{lemma}{Lemma}[section] 
\theoremstyle{definition}
\newtheorem{remark}{Remark}
 \numberwithin{equation}{section}
\def\C{\mathbb{C}}
\def\R{\mathbb{R}}
\def\Rn{{\mathbb{R}^n}}
\def\Sn{{{S}^{n-1}}}
\def\N{\mathbb{N}}
\def\Z{\mathbb{Z}}
\def\F{\mathcal{F}}
\def\S{\mathcal{S}}
\def\supp{\operatorname{supp}}
\begin{document}

\title [Fractional type Marcinkiewicz integral operators]{Fractional type
Marcinkiewicz integral operators associated to surfaces}
\author{
Yoshihiro Sawano and K\^{o}z\^{o} Yabuta}
\keywords{ 
$L^p$ boundedness; Marcinkiewicz integral; fractional integral operator;
Triebel-Lizorkin spaces; Sobolev spaces}

\subjclass[2000]{ 
Primary 42B20; Secondary 42B25, 47G10
}
\date{\today}
\thanks{
This work was partially supported by
Grant-in-Aid for Scientific Research (C) No.~23540228,
Japan Society for the Promotion of Science
and
Grant-in-Aid for Young Scientists (B) No.~24740085,
Japan Society for the Promotion of Science..
}
\address{Yoshihiro Sawano \endgraf Department of Mathematics and Information
Science
\endgraf Tokyo Metropolitan University \\Minami-Ohsawa 1-1, Hachioji 192-0397
\endgraf Japan }
\address{K\^{o}z\^{o} Yabuta \endgraf Research Center for Mathematical
Sciences \endgraf Kwansei Gakuin University\\Gakuen 2-1, Sanda 669-1337
\endgraf Japan }

\date{\today}
\maketitle
\begin{abstract} 
In this paper, 
we discuss the boundedness 
of the fractional type Marcinkiewicz integral operators 
associated to surfaces, and 
extend a result given 
by Chen, Fan and Ying in 2002. 
They showed that 
under certain conditions the fractional type Marcinkiewicz 
integral operators are bounded from the Triebel-Lizorkin spaces 
$\dot F_{pq}^{\alpha}(\Rn)$ to $L^p(\Rn)$. 
Recently the second author,
together with Xue and Yan, 
greatly weakened their assumptions. 
In this paper, we extend their results to the case 
where the operators are associated to the surfaces 
of the form 
$\{x=\phi(|y|)y/|y|\} \subset \Rn \times (\Rn \setminus \{0\})$. 

To prove our result, 
we discuss a characterization of the homogeneous 
Triebel-Lizorkin spaces  
in terms of lacunary sequences. 
\end{abstract}

\section{Introduction}
The fractional type Marcinkiewicz operator
is defined by
\begin{equation}\label{def:fracMar-1}
\mu_{\Omega,\rho,\alpha}f(x)
=\biggl(\int_{0}^{\infty}\biggl|\frac{1}{t^{\rho+\alpha}}
\int_{B(t)}f(x-y)\frac{\Omega(y/|y|)}{|y|^{n-\rho}}dy\biggr|^2\frac{dt}{t}
\biggr)^{\frac{1}{2}},
\end{equation}
where we write $B(r)=\{|x|<r\} \subset \Rn$ for $r>0$
here and below.
The operator
$\mu_{\Omega,\rho,\alpha}f$
is the so called singular integral operator.
In this paper, we shall prove that this operator
is bounded under a certain highly weak integrability assumption.
To this end,
we plan to employ a modified Littlewood-Paley decomposition
adapted to our situation.
It turns out that we can relax the integrability assumption on $\Omega$
and that the integral operator itself can be generalized
to a large extent.

Let $\Sn$ be the unit sphere in the $n$-dimensional Euclidean space $\Rn$
$(n\ge2)$, with the induced Lebesgue measure $d\sigma=d\sigma(x')$ and 
$\Omega\in L^1(\Sn)$. In the sequel, we often suppose that $\Omega$
satisfies the cancellation condition
\begin{equation}\label{eq:cancellation}
\int_{\Sn}\Omega(y')\,d\sigma(y')=0.
\end{equation}
Here, for the symbols $x'$ and $y'$,
we adopt the following convention:
Sometimes they stand for points in $\Sn$.
But for $x \in \Rn\setminus\{0\}$, we abbreviate
$x/|x|$ to $x'$
in the present paper.
We make this slight abuse of notation
since no confusion is likely to occur.

In the present paper we deal
with operators of Marcinkiewicz type.
Define
\begin{equation}\label{def:fracMar-q}
\mu_{\Omega,\rho,\alpha,q}f(x)
:=\biggl(\int_{0}^{\infty}\biggl|\frac{1}{t^{\rho+\alpha}}
\int_{B(t)}f(x-y)\frac{\Omega(y')}{|y|^{n-\rho}}dy
\biggr|^q\frac{dt}{t}\biggr)^{\frac{1}{q}}
\quad(x\in\Rn).
\end{equation}
As a special case,
by letting $\rho=1,\,\alpha=0,\,q=2$,
we recapture
the Marcinkiewicz integral  operator that 
E.~M.~Stein  introduced in 1958 \cite{Stein1}. 
In 1960,
H\"ormander considered 
the parametric Marcinkiewicz integral operator $\mu_{\Omega,\rho,\alpha,2}$ 
\cite{Hormander1}. 
Since then, about Marcinkiewicz type integral operators, 
many works appeared. 
A nice survey is given by S. Lu \cite{Lu1}.

We formulate our results
in the framework of Triebel-Lizorkin spaces of homogeneous type.
For $\alpha \in \R$ and $p,q \in (1,\infty)$,
we let
$\dot F_{pq}^{\alpha}(\Rn)$
be the Triebel-Lizorkin space
defined in \cite{Tr2}.
Note that the space ${\mathcal S}_\infty(\Rn)$ given by
\[
{\mathcal S}_\infty(\Rn)
:=
\bigcap_{\alpha \in ({\mathbb N} \cup \{0\})^n}
\left\{f \in {\mathcal S}(\Rn)\,:\,
\int_{\Rn}x^\alpha f(x)\,dx=0\right\}
\]
is dense in 
$\dot F_{pq}^{\alpha}(\Rn)$
as long as $\alpha \in \R$ and $p,q \in (1,\infty)$.
If $u \in (1,\infty)$,
then define
$u'=\frac{u}{u-1}$
and
$\tilde{u}=\max(u,u')$.
Here and below 
a tacit understanding in the present paper
is that the letter $C$ is used
for constants that may change from one occurrence to another,
that is, the letter $C$ will denote a positive constant which may
vary from line to line but will remain independent of the relevant quantities.
Our main theorem in the simplest form reads as follows:
\begin{theorem}\label{thm:fracMarc-1}
Let $\rho>0$, $1<p,\, q<\infty$, and $\Omega\in L^1(\Sn)$. 
\begin{enumerate}
\item[{\rm(i)}]
If $\alpha\in(0,4/(\tilde{p}\tilde{q}))$ 
and $\Omega$ satisfies the cancellation condition \eqref{eq:cancellation}, 
then
\begin{equation}\label{eq:fracMar-0-3}
\|\mu_{\Omega,\rho,\alpha,q}f\|_{L^p(\Rn)}
\le C 
\|\Omega\|_{L^1(\Sn)}\|f\|_{\dot F_{pq}^{\alpha}(\Rn)}
\end{equation}
for all $f \in {\mathcal S}_\infty(\R^n)$.
\item[{\rm(ii)}]
If $\alpha\in
\left(-\min\{\frac{4\beta}{\tilde{p}\tilde{q}},\,\rho\},0\right)$, and
\begin{equation}\label{eq:fracMar-0-4}
Z_\Omega
:=
\sup_{\xi'\in \Sn}\int_{\Sn}\frac{|\Omega(y')|}{|y'\cdot\xi'|^\beta}\,
d\sigma(y')<+\infty,
\end{equation}
for some  $0<\beta\leq1$,
then
\begin{equation}\label{eq:fracMar-0-3-19}
\|\mu_{\Omega,\rho,\alpha,q}f\|_{L^p(\Rn)}
\le C 
Z_\Omega\|f\|_{\dot F_{pq}^{\alpha}(\Rn)}
\end{equation}
for all $f \in {\mathcal S}_\infty(\R^n)$.
\item[{\rm(iii)}]
If $\alpha=0$ and $\Omega\in L\log L(\Sn)$ satisfies the cancellation 
condition \eqref{eq:cancellation}, 
then 
\begin{equation}\label{eq:fracMar-0-3-1}
\|\mu_{\Omega,\rho,\alpha,q}f\|_{L^p(\Rn)}
\le C 
\|\Omega\|_{L\log L(\Sn)}\|f\|_{\dot F_{pq}^{\alpha}(\Rn)}
\end{equation}
for all $f \in {\mathcal S}_\infty(\R^n)$.
\end{enumerate}
In any case,
by density we can extend
$(\ref{eq:fracMar-0-3})$,
$(\ref{eq:fracMar-0-3-19})$
and
$(\ref{eq:fracMar-0-3-1})$
and have them
for all $f \in \dot{F}_{pq}^{\alpha}(\Rn)$.
\end{theorem}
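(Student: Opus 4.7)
The plan is to view $\mu_{\Omega,\rho,\alpha,q}f(x)$ as the $L^q((0,\infty),dt/t)$-norm of the convolutions $K_t*f(x)$, where
\[
K_t(y):=t^{-\rho-\alpha}\chi_{B(t)}(y)|y|^{\rho-n}\Omega(y').
\]
A scaling computation gives $\widehat{K_t}(\xi)=t^{-\alpha}\widehat{K}(t\xi)$ for $K:=K_1$, supported in $B(1)$ with $\|K\|_{L^1(\Rn)}\lesssim\|\Omega\|_{L^1(\Sn)}/\rho$. Two pointwise Fourier estimates will drive everything: under the cancellation condition \eqref{eq:cancellation} one has the low-frequency bound $|\widehat K(\xi)|\lesssim\|\Omega\|_{L^1(\Sn)}\min(|\xi|,1)$, obtained by subtracting the zero integral of $K$, whereas under \eqref{eq:fracMar-0-4} a van der Corput type estimate on the spherical Fourier transform of $\Omega$ produces the high-frequency decay $|\widehat K(\xi)|\lesssim Z_\Omega|\xi|^{-\min(\beta,\rho)}$ for $|\xi|\ge 1$.

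Next I would fix a standard Littlewood-Paley partition $\{\Delta_j\}_{j\in\Z}$ in frequency, discretize the $t$-integral dyadically into $t\in[2^k,2^{k+1})$, and re-index by the relative scale $\ell=j+k$ so that the Fourier supports of $\widehat{K_t}$ and $\widehat{\Delta_j f}$ overlap near $t|\xi|\sim 2^\ell$. Writing $f=\sum_j\Delta_j f$ and applying Minkowski in $\ell^q$ after interchanging summations produces the splitting
\[
\mu_{\Omega,\rho,\alpha,q}f(x)\le\sum_{\ell\in\Z}G_\ell f(x),\qquad
G_\ell f(x):=\biggl(\sum_{k\in\Z}\int_{2^k}^{2^{k+1}}|K_t*\Delta_{\ell-k}f(x)|^q\,\frac{dt}{t}\biggr)^{1/q}.
\]
The whole theorem is then reduced to a geometric decay estimate $\|G_\ell f\|_{L^p(\Rn)}\lesssim 2^{-\theta|\ell|}\|f\|_{\dot F_{pq}^\alpha(\Rn)}$ for some $\theta=\theta(p,q,\alpha,\beta)>0$, after which summation in $\ell$ is immediate.

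At the scalar endpoint $p=q=2$, Plancherel combined with the two Fourier estimates above yields $\|G_\ell f\|_{L^2}\lesssim 2^{-\min(\alpha,1-\alpha)|\ell|}\|f\|_{\dot F_{22}^\alpha}$ in case (i) for $\alpha\in(0,1)$, and the symmetric $\|G_\ell f\|_{L^2}\lesssim 2^{-\min(|\alpha|,\beta-|\alpha|)|\ell|}\|f\|_{\dot F_{22}^\alpha}$ in case (ii) for $\alpha\in(-\min(\beta,\rho),0)$. For general $p,q\in(1,\infty)$ I would complement this with the trivial $L^p(\ell^q)$ bound $\|G_\ell f\|_{L^p}\lesssim\|f\|_{\dot F_{pq}^\alpha}$, proved via the $\Omega$-weighted maximal operator and a Fefferman-Stein vector-valued maximal inequality, and then complex-interpolate the sharp $L^2(\ell^2)$ inequality against this trivial bound along the $p$- and $q$-axes. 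The combined interpolation loses a factor $2/\tilde{p}$ in the $p$-direction and $2/\tilde{q}$ in the $q$-direction; their product $4/(\tilde{p}\tilde{q})$ dictates the admissible range of $\alpha$ stated in part (i), and the mirror-image interpolation, in which the $Z_\Omega$-driven high-frequency decay plays the role of the cancellation estimate, yields the range of part (ii). Part (iii) at the endpoint $\alpha=0$ is reduced to part (i) by a standard $L\log L$ decomposition $\Omega=\sum_\nu\lambda_\nu\Omega_\nu$ on $\Sn$ with $\|\Omega_\nu\|_{L^1(\Sn)}\lesssim 1$ and $\|\Omega_\nu\|_{L^2(\Sn)}\lesssim 2^\nu$, applied with a $\nu$-dependent $\alpha_\nu\downarrow 0$ and resummed against $\|\Omega\|_{L\log L(\Sn)}$.

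The main obstacle is transporting the sharp $L^2$ decay on $G_\ell$ to $L^p(\ell^q)$ without losing the geometric factor. This is exactly where the lacunary Triebel-Lizorkin characterization announced in the abstract becomes indispensable: for each fixed $\ell$, the Fourier supports of the blocks $\Delta_{\ell-k}f$ form a lacunary sequence in $k$, and the characterization legitimizes treating $(\Delta_{\ell-k}f)_k$ as a bona fide Triebel-Lizorkin decomposition of $f$ with a uniform (in $\ell$) norm equivalence
\[
\biggl\|\biggl(\sum_{k\in\Z}2^{q(\ell-k)\alpha}|\Delta_{\ell-k}f|^q\biggr)^{1/q}\biggr\|_{L^p(\Rn)}\lesssim\|f\|_{\dot F_{pq}^\alpha(\Rn)}.
\]
Matching the quantitative decay exponent produced by the interpolation with the precise upper endpoint $4/(\tilde{p}\tilde{q})$ (respectively $\min(4\beta/(\tilde{p}\tilde{q}),\rho)$) of the allowable range of $\alpha$ is the remaining bookkeeping, and it accounts for the bulk of the technical work.
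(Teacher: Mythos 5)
Your overall strategy is recognizably the same one the paper uses for this theorem: a dyadic splitting in $t$, pairing with Littlewood--Paley blocks $\Delta_{\ell-k}f$, a Fourier/Plancherel $L^2(\ell^2)$ bound, an $L^p(\ell^q)$ bound via maximal functions and Fefferman--Stein, a two-parameter interpolation producing the exponent $4/(\tilde p\tilde q)$, and an $L\log L$ decomposition for $\alpha=0$. There are, however, two genuine problems with the proposal as written.

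\textbf{The main gap: part (ii) for $\rho<1$.} You work directly with the full-ball kernel $K(y)=\chi_{B(1)}(y)|y|^{\rho-n}\Omega(y')$, and your stated high-frequency estimate is
$|\widehat K(\xi)|\lesssim Z_\Omega|\xi|^{-\min(\beta,\rho)}$. This is what van der Corput gives here, because the radial factor $\int_0^1 e^{-irs}r^{\rho-1}\,dr$ decays only like $|s|^{-\min(1,\rho)}$ when the domain of integration touches the origin. Feeding this into the $L^2$ estimate and interpolating gives the range
$|\alpha|<4\min(\beta,\rho)/(\tilde p\tilde q)$,
which is strictly smaller than the asserted range $|\alpha|<\min\bigl(4\beta/(\tilde p\tilde q),\,\rho\bigr)$ whenever $\rho<\beta$ and $\tilde p\tilde q>4$. (Note your own displayed $L^2$ bound $2^{-\min(|\alpha|,\beta-|\alpha|)|\ell|}$ silently replaces $\min(\beta,\rho)$ by $\beta$, which is not available from the stated Fourier bound.) The paper avoids this by first splitting $B(t)=\bigcup_{m\ge0}\bigl(B(2^{-m}t)\setminus B(2^{-m-1}t)\bigr)$ and pulling out the resulting geometric series: the summability of that series is \emph{exactly} what contributes the constraint $|\alpha|<\rho$, and it is completely decoupled from the Fourier decay, because the annular radial integral $\int_{t/2}^t e^{-irs}r^{\rho-1}\,dr$ now decays like $|s|^{-1}$ uniformly in $\rho$, yielding $|\widehat{\sigma_t}(\xi)|\lesssim Z_\Omega(t|\xi|)^{-\beta}$ for every $\beta\le 1$. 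Without this preliminary annular reduction your interpolation cannot reach the stated range.

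\textbf{Secondary issues.} (a) The ``trivial $L^p(\ell^q)$ bound'' cannot be $\|G_\ell f\|_{L^p}\lesssim\|f\|_{\dot F^\alpha_{pq}}$ independently of $\ell$; what the maximal-function argument actually gives is $\|G_\ell f\|_{L^p}\lesssim 2^{-\alpha\ell}\|f\|_{\dot F^\alpha_{pq}}$, where the factor $2^{-\alpha\ell}$ comes from the weight $t^{-\alpha}$ built into $K_t$. This factor is essential: it produces the decay for $\ell>0$ in (i) and it is the source of the growth that the $L^2$ estimate must beat by interpolation for $\ell<0$; if the ``trivial'' bound were genuinely $\ell$-independent, the interpolation would give the range $\alpha\in(0,1)$ irrespective of $p,q$, contradicting the theorem. (b) The role you assign to the lacunary Triebel--Lizorkin characterization is misplaced for this particular theorem. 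When $\phi(t)=t$ the frequency blocks $\Delta_{\ell-k}f$ are a mere shift of the \emph{standard} dyadic decomposition, so the usual $\dot F^\alpha_{pq}$ characterization is all that is used; Proposition \ref{lem:Triebel-Liz1} becomes necessary only in Theorems \ref{thm:fracMarcSurf-1} and \ref{thm:fracMarcSurf-1-b}, where the natural partition is adapted to the nonstandard lacunary sequence $a_j=1/\phi(2^{-j})$.
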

In 2002, J.~Chen, D.~Fan and Y.~Ying obtained a result
about 
the fractional type Marcinkiewicz integral operator  \cite{ChenFanYing},
which we recall now.
\par\medskip\noindent
{\bf Theorem A.} {\it
Let $1<p,\, q<\infty$ and $1<r\le\infty$. Suppose $\Omega\in L^r(\Sn)$
satisfies the cancellation condition \eqref{eq:cancellation}.
If $|\alpha|<2/(r'\tilde{p}\tilde{q})$ and $\rho=1$, then
\[
\|\mu_{\Omega,\rho,\alpha,q}f\|_{L^p(\Rn)}
\le C 
\|\Omega\|_{L^r(\Sn)}\|f\|_{\dot F_{pq}^{\alpha}(\Rn)}
\]
for all $f \in {\mathcal S}_\infty(\Rn)$.
}\par\medskip
Si, Wang and Jiang
discussed ones of somewhat different type
\cite{SiWangJiang}.
About Theorems \ref{thm:fracMarc-1} and A,
a couple of remarks may be in order.
\begin{remark}
If $0<\beta<1$, $1/(1-\beta)<r\le\infty$ and $\Omega\in L^r(\Sn)$, 
it is easily seen that the condition \eqref{eq:fracMar-0-4} is satisfied. 
In this case
\[
Z_\Omega \le C\|\Omega\|_{L^r(\Sn)}.
\] 
So, our
result includes completely Theorem A,
where they assumed that $\Omega\in L^r(\Sn)$. 
Let $r>1$ and define
\begin{equation}
\Omega_0(y')
=\operatorname{sgn}(y'\cdot(1,0,\dots,0)) 
|(y'\cdot(1,0,\dots,0))|^{-1/r}.
\end{equation} 
Then, it is also easily checked that 
$\Omega$ is in $L^1(\Sn) \setminus L^r(\Sn)$ 
and satisfies \eqref{eq:fracMar-0-4} for any $\beta \in (0,1/r')$.\par
In the case $\alpha=0$, $\rho=1$ and $q=2$,
 the conclusion in Theorem \ref{thm:fracMarc-1}{\rm(iii)} 
is shown to hold even when $\Omega\in L\log L^{1/2}(\Sn)$
 in \cite{AACPan}.
\end{remark}
\begin{remark}
We can relax the condition on $\alpha$: 
$|\alpha|<4/(r'\tilde{p}\tilde{q})$ suffices.
Indeed, one can get
$|(\Omega(\cdot)|\cdot|^{-n+1}\chi_{B(1)})\hat{}(\xi)|
\le C|\xi|^{-1/r'}$ by direct computation.

By reexamining their proof, 
we can parametrize Theorem A:
we can prove
\begin{equation}\label{eq:fracMar-0-2}
\biggl(\int_{\Rn}\biggl(\int_{0}^{\infty}\biggl|\frac{1}{t^{\rho+\alpha}}
\int_{B(t)}f(x-y)\frac{\Omega(y)}{|y|^{n-\rho}}dy\biggr|^q\frac{dt}{t}
\biggr)^{{p}/{q}}dx\biggr)^{1/p}
\le C \|f\|_{\dot F_{pq}^{\alpha}(\Rn)},
\end{equation}
provided $|\alpha|<4\min\left\{\frac1{r'},\min(\rho,1)\right\}
\frac{1}{\tilde{p}\tilde{q}}$.
Comparing (\ref{eq:fracMar-0-2}) with Theorem \ref{thm:fracMarc-1},
one concludes that our theorem outranges Theorem A
in view of 
the case when $\min(\rho,1)<1/r'$.
In our earlier paper \cite{XueYabutaYan1}, 
we improved Theorem A by relaxing the conditions postulated on $\Omega$. 
\end{remark}

Our method is also applicable even in more generalized settings.
For $\rho>0$, $\alpha\in \R$ and $\Omega\in L^1(\Sn)$, we define the fractional
 type Marcinkiewicz integral operator by (\ref{def:fracMar-1})
and the fractional type Marcinkiewicz integral operator associated to surfaces 
$\{(x,y)\,:\,x=\phi(|y|)y'\} \in \Rn \times (\Rn \setminus \{0\})$ by
\begin{equation}\label{def:fracMarSurf-1}
\mu_{\Omega,\rho, \phi,\alpha}f(x)
=\biggl(\int_{0}^{\infty}\biggl|\frac{1}{t^{\rho}\phi(t)^\alpha}
\int_{B(t)}f(x-\phi(|y|)y')\frac{\Omega(y)}{|y|^{n-\rho}}dy\biggr|^2
\frac{dt}{t}\biggr)^{\frac{1}{2}}.
\end{equation}

Theorem \ref{thm:fracMarc-1} extends further 
to the case when the operator
is equipped with a function space $\Delta_\gamma$ with $\gamma \ge 1$.
 Regarding to Calder\'on-Zygmund singular
integral and Marcinkiewicz integral operators, many authors discussed those
operators with modified kernel $b(|\cdot|)\Omega(\cdot)$ in place of
$\Omega(\cdot)$, where $b$ belongs to the class of all measurable functions
$h:[0,\infty)\to\C$ satisfying $\|h\|_{\Delta_\gamma}=
\sup_{R>0}\bigl(R^{-1}\int_0^R|h(t)|^\gamma dt\bigr)^{1/\gamma}<\infty$
$(1\le \gamma\le\infty)$, 
see \cite{QassemPan,DuoRubio,FanPan1,FanPan2,RFefferman}, etc. 
We note that
\begin{equation*}
L^\infty(\R_+)\subset \Delta_\beta(\R_+)\subset \Delta_\alpha(\R_+)\quad\text{
for }1\le\alpha<\beta,
\end{equation*}
and that all these inclusions are proper. 
We refer to \cite{BGST,KMNS,MN} for extension and generalization
of the space $\Delta_\gamma$.


We define the modified fractional type Marcinkiewicz operator
$\mu_{\Omega,\rho,\alpha,q}^{(b)}$ by
\begin{equation}\label{def:fracMar-b-q}
\mu_{\Omega,\rho,\alpha,q}^{(b)}f(x)
=\biggl(\int_{0}^{\infty}\biggl|\frac{1}{t^{\rho+\alpha}}
\int_{B(t)}f(x-y)\frac{b(|y|)\Omega(y')}{|y|^{n-\rho}}dy
\biggr|^q\frac{dt}{t}\biggr)^{\frac{1}{q}}.
\end{equation}
We can recover Theorem \ref{thm:fracMarc-1}
by letting $b \equiv 1$ in the next theorem.
\begin{theorem}\label{thm:fracMarc-1-b}
Suppose that we are given
$\Omega\in L^1(\Sn)$
and parameters 
$p,q,\alpha,\gamma,\rho$
satisfying
$$
1<p,\, q<\infty,
\gamma>\frac12\max\{\tilde{p},\tilde{q}\},
\rho>0.
$$
\begin{enumerate}
\item[{\rm(i)}]
Let
$
\alpha\in\left(0,
\frac{4(1/\tilde{p}-1/(2\gamma))(1/\tilde{q}-1/(2\gamma))}{(1-1/\gamma)^2}
\right).
$
If $b\in\Delta_\gamma(\R_+)$ and $\Omega$ satisfies the 
cancellation condition \eqref{eq:cancellation}, 
then
\begin{equation}\label{eq:fracMar-0-3-b}
\|\mu_{\Omega,\rho,\alpha,q}^{(b)}f\|_{L^p(\Rn)}
\le C 
\|\Omega\|_{L^1(\Sn)}\|b\|_{\Delta_\gamma}\|f\|_{\dot F_{pq}^{\alpha}(\Rn)}
\end{equation}
for all $f \in {\mathcal S}_\infty(\R^n)$.
\item[{\rm(ii)}]
Assume 
$
\alpha
\in\left(-\min\left\{2\beta\frac{1/\tilde{p}-1/(2\gamma)}{1-1/\gamma}
\cdot
\frac{1/\tilde{q}-1/(2\gamma)}{1-1/\gamma},\,\rho\right\},0
\right)
$
with $\beta \in (0,1]$.
If $b \in \Delta_{\max(\gamma,2)}$
and
\begin{equation}\label{eq:fracMar-0-4-b}
W_\Omega:=
\sqrt{\sup_{\xi'\in \Sn}\int_{\Sn\times\Sn}
\frac{|\Omega(y')\Omega(z')|}{|(y'-z')\cdot\xi'|^\beta}
\,d\sigma(y')d\sigma(z')}
<+\infty,
\end{equation}
then 
\begin{equation}\label{eq:fracMar-0-3-b-101}
\|\mu_{\Omega,\rho,\alpha,q}^{(b)}f\|_{L^p(\Rn)}
\le C 
W_\Omega\|b\|_{\max(\gamma,2)}\|f\|_{\dot F_{pq}^{\alpha}(\Rn)}
\end{equation}
for all $f \in {\mathcal S}_\infty(\R^n)$.
\item[{\rm(iii)}]
Assume $\alpha=0$.
If 
$b \in \Delta_{\max(\gamma,2)}$,
$\Omega\in L\log L(\Sn)$
and
$\Omega$ satisfies the cancellation condition 
\eqref{eq:cancellation}, then 
\begin{equation}\label{eq:fracMar-0-3-b-10}
\|\mu_{\Omega,\rho,\alpha,q}^{(b)}f\|_{L^p(\Rn)}
\le C 
\|\Omega\|_{L\log L(\Sn)}
\|b\|_{\max(\gamma,2)}\|f\|_{\dot F_{pq}^{\alpha}(\Rn)}
\end{equation}
for all $f \in {\mathcal S}_\infty(\R^n)$.
\end{enumerate}
In any case,
by density we can extend
$(\ref{eq:fracMar-0-3-b})$,
$(\ref{eq:fracMar-0-3-b-101})$
and
$(\ref{eq:fracMar-0-3-b-10})$
and have them
for all $f \in \dot{F}_{pq}^{\alpha}(\Rn)$.
\end{theorem}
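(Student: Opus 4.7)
The plan is to extend the proof of Theorem \ref{thm:fracMarc-1} by keeping the same dyadic/Littlewood--Paley architecture, but replacing every Fourier estimate on the convolution kernel by its $b\in\Delta_\gamma$ analogue, obtained by an application of H\"older's inequality in the radial variable. First I would discretize the outer radial parameter, writing
\[
\mu^{(b)}_{\Omega,\rho,\alpha,q}f(x)^q
\le C\sum_{j\in\Z}2^{-j\alpha q}\int_1^2\bigl|(K_{2^js}*f)(x)\bigr|^q\frac{ds}{s},
\]
where $K_t(y)=t^{-\rho}b(|y|)\Omega(y')|y|^{\rho-n}\chi_{B(t)}(y)$, and then further decompose $B(t)$ into dyadic annuli $A_k(t)=\{2^{-k}t<|y|\le 2^{-k+1}t\}$, denoting by $\sigma_{j,k}$ the resulting restricted kernel. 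The problem is then reduced to uniform Fourier-side estimates on $\widehat{\sigma_{j,k}}(\xi)$, combined with the lacunary characterization of $\dot F^\alpha_{pq}(\Rn)$ advertised in the abstract.

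The core Fourier inequalities I expect to establish are, in the cancellation setting of (i),
\[
\bigl|\widehat{\sigma_{j,k}}(\xi)\bigr|
\le C\|b\|_{\Delta_\gamma}\|\Omega\|_{L^1(\Sn)}\min\bigl(1,(2^{j-k}|\xi|)^{1/\gamma'}\bigr),
\]
and, in the setting of (ii),
\[
\bigl|\widehat{\sigma_{j,k}}(\xi)\bigr|
\le C\|b\|_{\Delta_{\max(\gamma,2)}}W_\Omega\min\bigl(1,(2^{j-k}|\xi|)^{-\beta/(2\gamma')}\bigr);
\]
the exponents $1/\gamma'$ and $\beta/(2\gamma')$ are exactly what H\"older against $b$ in the radial slot substitutes for the exponents $1$ and $\beta/2$ arising in the $b\equiv 1$ case. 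With these pointwise bounds in hand, I would run the standard Duoandikoetxea--Rubio de Francia type almost-orthogonality argument against a Littlewood--Paley decomposition of $f$, and invoke the vector-valued Fefferman--Stein maximal inequality on the resulting square functions. The admissible range of $\alpha$ then comes from optimizing the interpolation between the trivial bound and the Fourier decay; after writing $1-1/\gamma=1/\gamma'$ one obtains precisely the intervals stated in (i) and (ii), and the hypothesis $\gamma>\tfrac12\max(\tilde p,\tilde q)$ turns out to be exactly the threshold at which these intervals become non-empty.

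For part (iii) I would invoke the classical $L\log L$ device: decompose $\Omega=\sum_{m\ge 0}\Omega_m$ with $\Omega_m$ supported in the dyadic layer $\{2^m\le|\Omega|<2^{m+1}\}$, adjust each $\Omega_m$ by a constant to preserve the cancellation condition, apply part (i) with a small $\alpha_m\downarrow 0$ chosen so that $\sum_m 2^{\alpha_m}\|\Omega_m\|_{L^1(\Sn)}\le C\|\Omega\|_{L\log L(\Sn)}$, and sum. The main obstacle I anticipate is the bookkeeping behind the $-\beta/(2\gamma')$ decay in part (ii): it comes from a $T^*T$-type computation in which $|\Omega(y')\Omega(z')|$ is paired against $|(y'-z')\cdot\xi'|^{-\beta}$, forcing one to handle the product $b(|y|)b(|z|)$ on the radial side, and the nesting of the $\Delta$-spaces recorded in the paper then forces the stronger assumption $b\in\Delta_{\max(\gamma,2)}$ in parts (ii) and (iii) rather than just $b\in\Delta_\gamma$.
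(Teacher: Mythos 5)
Your overall architecture is the right one and is the same as the paper's: discretize the radial parameter dyadically, split $B(t)$ into shells, take Fourier transforms of the resulting piecewise measures, run the Duoandikoetxea--Rubio de Francia almost-orthogonality/interpolation scheme against a Littlewood--Paley decomposition, and finish with the vector-valued Fefferman--Stein inequality. (The paper does not prove Theorem~\ref{thm:fracMarc-1-b} separately; it is the $\phi(t)=t$, $c_0=2$, $c_1=1$ case of Theorem~\ref{thm:fracMarcSurf-1-b}, whose proof is in Section~\ref{section:3}.) However, the mechanism you propose for how $\gamma$ enters the game is not correct, and this is not a cosmetic point: it produces the wrong dependence on $\alpha$.

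You claim that applying H\"older in the radial variable against $b$ converts the Fourier decay exponents $1$ and $\beta/2$ of the $b\equiv 1$ case into $1/\gamma'$ and $\beta/(2\gamma')$. It does not. Applying H\"older with exponents $\gamma,\gamma'$ to $\int_{t/2}^t |b(r)|\,\min(2,\phi(r)|y'\cdot\xi|)\,\frac{dr}{r}$ gives $\|b\|_{\Delta_\gamma}\bigl(\int_{t/2}^t\min(2,\phi(r)|y'\cdot\xi|)^{\gamma'}\frac{dr}{r}\bigr)^{1/\gamma'}$, and when $\phi(t)|y'\cdot\xi|\lesssim 1$ the inner bracket is comparable to $(\phi(t)|y'\cdot\xi|)^{\gamma'}$; taking the $1/\gamma'$ power returns you to exponent $1$, not $1/\gamma'$. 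The paper's Lemma~\ref{lem:121108-0}(ii) gives the bound $|\widehat{\sigma_t}(\xi)|\le C\|\Omega\|_{L^1}\|b\|_{\Delta_1}\,\phi(t)|\xi|$ (exponent $1$, only $\Delta_1$ is needed), and Lemma~\ref{lem:121030-20} gives $|\widehat{\sigma_t}(\xi)|\le C W_\Omega\|b\|_{\Delta_2}\,(|\xi|\phi(t))^{-\beta/2}$ (exponent $\beta/2$, only $\Delta_2$ is needed). The assumption $b\in\Delta_\gamma$ enters somewhere else entirely: the maximal operator $\sigma^*$ defined in \eqref{eq:sigma-max}. Lemma~\ref{lem:121030-2} bounds $\sigma^*$ by a directional maximal function of $|f|^{\gamma'}$, so $\sigma^*$ is $L^s$-bounded only for $s>\gamma'$. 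This in turn restricts the off-$L^2$ indices in Lemma~\ref{lem:121108-1} to $1<q<r<\gamma q$ (or the dual range), and therefore the interpolation parameters $\theta_1,\theta_2$ in \eqref{eq:mu-j-Lp-7-1}--\eqref{eq:mu-j-Lp-7-2} can be pushed only up to $\theta_i<\frac{1/\tilde p-1/(2\gamma)}{1/2-1/(2\gamma)}$ (resp.\ with $\tilde q$). That supremum of $\theta_1\theta_2$ is exactly the quantity $\frac{4(1/\tilde p-1/(2\gamma))(1/\tilde q-1/(2\gamma))}{(1-1/\gamma)^2}$ that bounds $\alpha$, with the Fourier exponent still $1$. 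If you instead feed in your weaker Fourier exponent $1/\gamma'$ and allow the interpolation indices to range freely, you do not recover this; for instance, letting $\gamma\to\infty$ your bound would give $\alpha<1$ with no $p,q$ dependence, whereas the correct limit is $\alpha<4/(\tilde p\tilde q)$.

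Your sketch of part (iii) is also missing a step. After decomposing $\Omega=\sum_m\Omega_m$ and normalizing, you propose applying only part (i) with $\alpha_m\downarrow 0$. But (i) gives $\|\mu^{(b)}_{\Omega_m,\rho,\alpha_m,q}f\|_{L^p}\lesssim\|\Omega_m\|_{L^1}\|f\|_{\dot F^{\alpha_m}_{pq}}$, whose right-hand side is a norm in $\dot F^{\alpha_m}_{pq}$, not $\dot F^0_{pq}$, and one cannot pass from one to the other by a constant. The paper resolves this by also invoking part (ii): since each $\Omega_m\in L^2(\Sn)$, (ii) applies at a fixed $-\alpha_0<0$ with $\beta<1/2$, and one interpolates between the $+1/m$ and the $-\alpha_0$ estimates at level $\alpha=0$, producing an $m$-linear constant $Cm$ that is summable against $\sum_m m\|\Omega_m\|_{L^1}\lesssim\|\Omega\|_{L\log L}$ by \eqref{eq:fracMar-8-5}. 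Without the (ii)-leg of that interpolation the argument does not close.
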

\begin{remark}\label{rm:fracMarc-1-b}
In Theorem \ref{thm:fracMarc-1}(ii)
a modification of the proof changes
$4\beta$ into $2\beta$.
We cannot estimate directly the Fourier transform of the measure
${\sigma_t}$ in Section 3, and use the idea given by Duoandikoetxea and
Rubio~de~Francia \cite[p.~551]{DuoRubio} as in Chen, Fan and Ying
\cite{ChenFanYing}.\par
If $0<\beta<1$, $1/(1-\beta)<r\le\infty$ and $\Omega\in L^r(\Sn)$, 
it is easily seen that the condition \eqref{eq:fracMar-0-4-b} is satisfied. 
In this case
\[
W_\Omega \le C\|\Omega\|_{L^r(\Sn)}.
\] 
In the case $\alpha=0$, $\rho=1$ and $q=2$, 
it is again known in \cite{Al-Qassem1}
that the conclusion in Theorem \ref{thm:fracMarc-1-b}{\rm(iii)} holds 
even when $\Omega\in L\log L^{1/2}(\Sn)$.\par
In the earlier paper \cite{XueYabutaYan1},  
in Theorem \ref{thm:fracMarc-1}(ii) 
(respectively in Theorem \ref{thm:fracMarc-1-b}(ii)),
we needed to postulate
the additional conditions $\rho>\beta$ 
(respectively $2\rho>\beta$) and the cancellation condition on $\Omega$.
However, these are no longer necessary in the new theorems.
\end{remark}

In addition to the factor of $b$,
we can even distort the convolution.
For $\alpha>0$, $1\le q<\infty$,
a kernel $\Omega$ and a positive function $\phi$ on $\R_+$, we define 
the operator $\mu_{\Omega,\rho,\phi,\alpha,q}$ 
and the modified one $\mu_{\Omega,\rho,\phi,\alpha,q}^{(b)}$ by
\begin{equation}\label{def:fracMarSurf-q}
\mu_{\Omega,\rho,\phi,\alpha,q}f(x)
=\biggl(\int_{0}^{\infty}\biggl|\frac{1}{t^{\rho}\phi(t)^\alpha}
\int_{B(t)}f(x-\phi(|y|)y')\frac{\Omega(y)}{|y|^{n-\rho}}dy\biggr|^q
\frac{dt}{t}\biggr)^{\frac{1}{q}},
\end{equation}
and
\begin{equation}\label{def:fracMarSurf-b-q}
\mu_{\Omega,\rho,\phi,\alpha,q}^{(b)}f(x)
=\biggl(\int_{0}^{\infty}\biggl|\frac{1}{t^{\rho}\phi(t)^\alpha}
\int_{B(t)}f(x-\phi(|y|)y')\frac{b(|y|)\Omega(y)}{|y|^{n-\rho}}dy
\biggr|^q\frac{dt}{t}\biggr)^{\frac{1}{q}}.
\end{equation}
Now we formulate our main theorem.
Here and below we write $\R_+:=(0,\infty)$.
\begin{theorem}\label{thm:fracMarcSurf-1}
Let $\rho>0$, $1<p,\, q<\infty$, and $\Omega\in L^1(\Sn)$. 
Let $c_0>1$ and $c_1>0$.
Suppose that $\phi:\R_+ \to \R_+$ 
is a nonnegative increasing $C^1$-function such that
\begin{align}
\phi(2t)&\le c_0\phi(t)\quad\text{for all}\ \ t\in\mathbb R_+
\label{eq:doubling}
\end{align}
and that
\begin{align}
\phi(t)&\le
c_1t\phi^\prime(t)\quad\text{for all}\ \ t\in\mathbb R_+.\label{eq:uniform}
\end{align}
Define
\[\varphi(t):=\frac{\phi(t)}{t\phi^\prime(t)}
\quad\text{for all}\ \ t\in\mathbb R_+.\]
Then: 
\begin{enumerate}
\item[{\rm(i)}]
Let
\begin{equation}\label{eq:121108-201}
\alpha\in\left(0,\frac{4}{\tilde{p}\tilde{q}c_1\log_2c_0}\right).
\end{equation}
If $\Omega$ satisfies the cancellation condition \eqref{eq:cancellation}, then
\begin{equation}\label{eq:fracMarSurf-0-3}
\|\mu_{\Omega,\rho,\phi,\alpha,q}f\|_{L^p(\Rn)}
\le C 
\|\Omega\|_{L^1(\Sn)}\|f\|_{\dot F_{pq}^{\alpha}(\Rn)}
\end{equation}
for all $f \in {\mathcal S}_\infty(\R^n)$.
\item[{\rm(ii)}]
Let
\[
\alpha\in\left(
-\min\left\{\frac{4\beta}{c_1\log_2c_0 \cdot\tilde{p}\tilde{q}},\,
\frac{\rho}{\log_2c_0}\right\},0\right).
\]
If $\phi$ satisfies the following additional condition
\begin{equation}
\varphi(t) \text{ or }t\phi'(t)\text{ is monotonic on }\R_+\label{eq:monotone},
\end{equation}
and $\Omega$ satisfies
\begin{equation}\label{eq:fracMarSurf-0-4}
Z_\Omega
:=\sup_{\xi'\in \Sn}\int_{\Sn}\frac{|\Omega(y')|}{|y'\cdot\xi'|^\beta}\,
d\sigma(y')<+\infty,
\end{equation}
for some  $0<\beta\le1$,
then 
\begin{equation}\label{eq:121108-19}
\|\mu_{\Omega,\rho,\phi,\alpha,q}f\|_{L^p(\Rn)}
\le C Z_\Omega \|f\|_{\dot F_{pq}^{\alpha}(\Rn)}
\end{equation}
for all $f \in {\mathcal S}_\infty(\R^n)$.
\item[{\rm(iii)}]
Let $\alpha=0$.
If $\Omega\in L\log L(\Sn)$ and it satisfies the cancellation condition
 \eqref{eq:cancellation}, then
\begin{equation}\label{eq:121108-1}
\|\mu_{\Omega,\rho,\phi,\alpha,q}f\|_{L^p(\Rn)}
\le C 
\|\Omega\|_{L\log L(\Sn)}\|f\|_{\dot F_{pq}^{\alpha}(\Rn)}
\end{equation}
for all $f \in {\mathcal S}_\infty(\R^n)$.
\end{enumerate}
In any case,
by density we can extend
$(\ref{eq:fracMarSurf-0-3})$,
$(\ref{eq:121108-19})$
and
$(\ref{eq:121108-1})$
and have them
for all $f \in \dot{F}_{pq}^{\alpha}(\Rn)$.
\end{theorem}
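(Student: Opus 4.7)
The plan is to adapt the argument used to establish Theorem \ref{thm:fracMarc-1} (the flat case $\phi(t)=t$) by replacing the dyadic sequence $\{2^k\}$ throughout with the lacunary sequence $\{\phi(2^k)\}_{k\in\Z}$. Hypotheses (\ref{eq:doubling}) and (\ref{eq:uniform}) together give
\[
2^{1/c_1}\;\le\;\frac{\phi(2t)}{\phi(t)}\;\le\; c_0\qquad(t>0),
\]
the upper bound being the doubling property and the lower bound coming from $(\log\phi)'(t)\ge 1/(c_1 t)$. Thus $\{\phi(2^k)\}_{k\in\Z}$ is genuinely lacunary, and the two extreme ratios combine to produce the factor $c_1\log_2 c_0$ that enters the admissible ranges of $\alpha$ in (\ref{eq:121108-201}) and in part~(ii). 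I would then write
\[
\mu_{\Omega,\rho,\phi,\alpha,q}f(x) = \bigl\|\phi(t)^{-\alpha}(\sigma_t*f)(x)\bigr\|_{L^q(\R_+,dt/t)},
\]
where $\sigma_t$ is the pushforward under $y\mapsto\phi(|y|)y'$ of the measure $t^{-\rho}|y|^{\rho-n}\Omega(y')\chi_{B(t)}(y)\,dy$, and reduce via doubling to controlling
\[
\left\|\Bigl(\sum_{k\in\Z}\phi(2^k)^{-q\alpha}|\sigma_{2^k}*f|^q\Bigr)^{1/q}\right\|_{L^p(\Rn)}.
\]

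The core step is then a pair of Fourier estimates for
\[
\widehat{\sigma}_{2^k}(\xi)=\int_0^1\int_{\Sn}e^{-i\phi(2^ks)\,u\cdot\xi}\,\Omega(u)\,s^{\rho-1}\,d\sigma(u)\,ds.
\]
On the low-frequency side, cancellation of $\Omega$ yields $|\widehat{\sigma}_{2^k}(\xi)|\le C\|\Omega\|_1\phi(2^k)|\xi|$ in parts (i) and (iii) after inserting $e^{-i\phi(2^ks)u\cdot\xi}-1$ and using (\ref{eq:doubling}). On the high-frequency side, one aims at $|\widehat{\sigma}_{2^k}(\xi)|\le C\,(\phi(2^k)|\xi|)^{-\delta}$ for some $\delta>0$. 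In parts (i) and (iii), this is a van der Corput argument in $s$, using (\ref{eq:uniform}) to bound $\phi'(2^ks)$ from below. In part (ii), where no cancellation on $\Omega$ is available, I would apply the Duoandikoetxea--Rubio de Francia device (cf.~\cite{DuoRubio},~\cite{ChenFanYing}): square $\widehat{\sigma}_{2^k}(\xi)$ to produce a double integral over $\Sn\times\Sn$, integrate by parts in $s$ against the difference phase $\phi(2^ks)(u-v)\cdot\xi$, and then integrate the result against the singular kernel of (\ref{eq:fracMarSurf-0-4}). Condition (\ref{eq:monotone}) is introduced precisely to ensure that $2^k\phi'(2^ks)(u-v)\cdot\xi$ is monotonic in $s$, which legitimises this integration by parts and produces the clean gain of order $(\phi(2^k)|\xi|)^{-\beta/(2\log_2 c_0)}$.

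Third, I would combine these Fourier bounds with the lacunary Littlewood-Paley characterisation of $\dot F_{pq}^{\alpha}(\Rn)$ announced in the abstract. Choose smooth projections $\Delta_j$ onto $\{|\xi|\sim\phi(2^{-j})^{-1}\}$ and expand $f=\sum_j\Delta_j f$. For each $k,j\in\Z$ the two Fourier estimates together yield a geometric $L^2$-decay $\|\sigma_{2^k}*\Delta_{k+j}f\|_{L^2}\le C\,a_{|j|}\|\Delta_{k+j}f\|_{L^2}$ with $a_m\sim\min(2^{-m/c_1},c_0^{-m})^{\delta}$. Interpolating with the $L^p$-boundedness of the maximal operator $\sup_k|\sigma_{2^k}|*|f|$ (standard Calder\'on-Zygmund theory, since the total variation of $\sigma_{2^k}$ is bounded by $\|\Omega\|_{L^1(\Sn)}$), and then summing over $j$ via the vector-valued Fefferman-Stein inequality, yields the desired Triebel-Lizorkin bound, valid precisely when $|\alpha|$ is strictly less than the decay rate of $a_m$, which gives the ranges stated in (i) and (ii). Part~(iii) is then obtained by decomposing $\Omega=\sum_m\Omega_m$ into level sets of $|\Omega|$, applying (i) to each $\Omega_m$ with a shrinking $\alpha=\alpha_m\to 0^+$, and summing, the $L\log L$ norm compensating the logarithmic loss.

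The hardest step will be the Fourier decay in part (ii): without any cancellation on $\Omega$, the only usable tool is hypothesis (\ref{eq:fracMarSurf-0-4}), and the squared integrand couples the two sphere variables $u,v$ through the factor $|(u-v)\cdot\xi'|^{-\beta}$, forcing integration by parts in $s$ \emph{before} integration against this singular kernel. Executing this cleanly requires the full force of (\ref{eq:monotone}) and careful tracking of the constants $c_0,c_1$ through the interpolation step, so that the endpoint exponents on $\alpha$ come out exactly as stated rather than with a loss.
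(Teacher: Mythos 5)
Your plan for parts (i) and (iii) matches the paper's strategy (which proves them as the $b\equiv1$, $\gamma\to\infty$ specialisation of Theorem~\ref{thm:fracMarcSurf-1-b}): a Fourier estimate on $\widehat{\sigma_t}$ exploiting the cancellation of $\Omega$ on the low-frequency side, the trivial $L^1$ bound plus the uniform $L^p$ bound for the associated maximal operator on the high-frequency side, interpolation, and a lacunary Littlewood--Paley characterisation of $\dot F^\alpha_{pq}$; part (iii) by the level-set decomposition $\Omega=\sum_m\Omega_m$ with a shrinking auxiliary $\alpha_m\sim1/m$. That is fine.

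The gap is in part (ii). You propose the Duoandikoetxea--Rubio de Francia squaring device: square $\widehat{\sigma_t}$ to get a double integral over $\Sn\times\Sn$ with phase $\phi(r)(u-v)\cdot\xi$, integrate by parts in $r$, then integrate against the resulting singular kernel $|(u-v)\cdot\xi'|^{-\beta}$. Two problems. First, the singular kernel you obtain after squaring is a \emph{two-variable} kernel, so the natural hypothesis to close the estimate is the double-integral condition $W_\Omega<\infty$ in (\ref{eq:fracMarSurf-0-4-b}), not the single-integral condition $Z_\Omega<\infty$ in (\ref{eq:fracMarSurf-0-4}) that the theorem actually assumes; these are genuinely different conditions and neither obviously implies the other. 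Second, the squaring route gives only $|\widehat{\sigma_t}(\xi)|\le C(\phi(t)|\xi|)^{-\beta/2}$ (a square root is taken at the end), which after the interpolation step produces the $\alpha$-range with $2\beta$ in the numerator, i.e. exactly the range in Theorem~\ref{thm:fracMarcSurf-1-b}(ii) — not the $4\beta$ range claimed in Theorem~\ref{thm:fracMarcSurf-1}(ii). The whole point of stating Theorem~\ref{thm:fracMarcSurf-1}(ii) separately (and the reason Remark~\ref{rm:fracMarc-1-b} flags the distinction) is that when $b\equiv1$ one can do better: the paper estimates $\widehat{\sigma_t}$ \emph{directly}. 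After the change of variables $s=\phi(r)$, the inner one-dimensional integral $B(t,\xi)$ becomes $\frac{1}{t^\rho}\int_{\phi(t/2)}^{\phi(t)}e^{-isy'\cdot\xi}\,\frac{\phi^{-1}(s)^\rho}{s}\,\varphi(\phi^{-1}(s))\,ds$, and the monotonicity hypothesis (\ref{eq:monotone}) is used precisely to make the amplitude factor monotone so that the \emph{second mean value theorem} can be applied to each of the real and imaginary parts, yielding the unsquared bound $|B(t,\xi)|\le C/(\phi(t)|y'\cdot\xi|)$. Interpolating the trivial bound $|B(t,\xi)|\le1/\rho$ against this and then integrating against $|\Omega(y')|$ using $Z_\Omega$ yields $|\widehat{\sigma_t}(\xi)|\le C Z_\Omega(\phi(t)|\xi|)^{-\beta}$ — decay $\beta$, not $\beta/2$ — which is what produces the stated $4\beta$ range. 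Without this device your argument would at best recover Theorem~\ref{thm:fracMarcSurf-1-b}(ii), a strictly weaker statement both in the $\alpha$-range and in the hypothesis on $\Omega$.
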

Note that (\ref{eq:doubling}) is referred to as
the doubling condition. Thanks to the useful 
conversation with Professor X.~X. Tau and Miss S.~He in the Zhejiang University
 of Science and Technology,
we could improve our results. 

We state our main result in full generality.
Theorem \ref{thm:fracMarcSurf-1} is almost a direct consequence
of the next theorem;
\begin{theorem}\label{thm:fracMarcSurf-1-b}
Suppose that we are given
$\Omega\in L^1(\Sn)$,
$\phi \in C^1(\R_+,\R_+)$
and parameters
$p,q,\alpha,\gamma,\rho$
satisfying
$$
1<p, q<\infty,
\rho>0,
\gamma>\frac12\max\{\tilde{p},\tilde{q}\},
$$ 
in addition to
\eqref{eq:doubling} 
and \eqref{eq:uniform}
in Theorem {\rm\ref{thm:fracMarcSurf-1}}
Then: 
\begin{enumerate}
\item[{\rm(i)}]
Assume that
\begin{equation}\label{eq:121108-202}
\alpha
\in\left(0,\frac{4}{c_1\log_2c_0}
\cdot
\frac{1/\tilde{p}-1/(2\gamma)}{1-1/\gamma}
\cdot
\frac{1/\tilde{q}-1/(2\gamma)}{1-1/\gamma}
\right).
\end{equation}
If 
$b\in\Delta_\gamma(\R_+)$
and
$\Omega$ satisfies the cancellation condition 
\eqref{eq:cancellation},  
then
\begin{equation}\label{eq:fracMarSurf-0-3-b}
\|\mu_{\Omega,\rho,\phi,\alpha,q}^{(b)}f\|_{L^p(\Rn)}
\le C
\|\Omega\|_{L^1(\Sn)} 
\|b\|_{\Delta_\gamma}
\|f\|_{\dot F_{pq}^{\alpha}(\Rn)}
\end{equation}
for all $f \in {\mathcal S}_\infty(\R^n)$.
\item[{\rm(ii)}]
Assume 
$
\alpha \in\left(
-\min\left\{\frac{2\beta}{c_1\log_2c_0}
\cdot
\frac{1/\tilde{p}-1/(2\gamma)}{1-1/\gamma}
\cdot
\frac{1/\tilde{q}-1/(2\gamma)}{1-1/\gamma},
\frac{\rho}{\log_2c_0}\right\},0
\right)
$
for some $\beta \in (0,1]$.
If
$b\in\Delta_{\max(\gamma,2)}$
and
\begin{equation}\label{eq:fracMarSurf-0-4-b}
W_\Omega
:=
\sup_{\xi'\in \Sn}
\sqrt{
\int_{\Sn\times\Sn}
\frac{|\Omega(y')\Omega(z')|}{|(y'-z')\cdot\xi'|^\beta}
\,d\sigma(y')d\sigma(z')}
<+\infty,
\end{equation}
then 
\begin{equation}\label{eq:121108-29}
\|\mu_{\Omega,\rho,\phi,\alpha,q}^{(b)}f\|_{L^p(\Rn)}
\le C
W_\Omega
\|b\|_{\Delta_{\max(\gamma,2)}}
\|f\|_{\dot F_{pq}^{\alpha}(\Rn)}
\end{equation}
for all $f \in {\mathcal S}_\infty(\R^n)$.
\item[{\rm(iii)}]
Assume $\alpha=0$.
If
$b\in\Delta_{\max(\gamma,2)}$,
$\Omega\in L\log L(\Sn)$ 
and it satisfies the 
cancellation condition \eqref{eq:cancellation}, 
then
\begin{equation}\label{eq:121108-2}
\|\mu_{\Omega,\rho,\phi,\alpha,q}^{(b)}f\|_{L^p(\Rn)}
\le C
\|\Omega\|_{L\log L(\Sn)}
\|b\|_{\Delta_{\max(\gamma,2)}}
\|f\|_{\dot F_{pq}^{\alpha}(\Rn)}
\end{equation}
for all $f \in {\mathcal S}_\infty(\R^n)$.
\end{enumerate}
In any case,
by density we can extend
$(\ref{eq:fracMarSurf-0-3-b})$,
$(\ref{eq:121108-29})$
and
$(\ref{eq:121108-2})$
and have them
for all $f \in \dot{F}_{pq}^{\alpha}(\Rn)$.
\end{theorem}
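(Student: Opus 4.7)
The plan is to decompose $\mu_{\Omega,\rho,\phi,\alpha,q}^{(b)}$ dyadically along the lacunary sequence $\{\phi(2^k)\}_{k\in\Z}$ and to combine Fourier transform estimates for the resulting measures with a Littlewood--Paley characterization of $\dot F_{pq}^{\alpha}(\Rn)$ adapted to this sequence. For each $k\in\Z$, introduce the shell-convolution
\[
\sigma_k*f(x):=\int_{2^k\le|y|<2^{k+1}}f(x-\phi(|y|)y')\frac{b(|y|)\Omega(y')}{|y|^{n-\rho}}\,dy.
\]
When $2^m\le t<2^{m+1}$, the inner integral of \eqref{def:fracMarSurf-b-q} equals $\sum_{k\le m-1}\sigma_k*f$ plus a boundary piece on $\{2^m\le|y|<t\}$; the doubling condition \eqref{eq:doubling} together with \eqref{eq:uniform} lets one replace the normalization $t^{-\rho}\phi(t)^{-\alpha}$ by $2^{-m\rho}\phi(2^m)^{-\alpha}$ up to a constant depending only on $c_0,c_1$. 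The problem thereby reduces to bounding
\[
\Bigl\|\Bigl(\sum_{m\in\Z}\phi(2^m)^{-\alpha q}2^{-m\rho q}\Bigl|\sum_{k\le m}\sigma_k*f\Bigr|^q\Bigr)^{1/q}\Bigr\|_{L^p(\Rn)}
\]
by $\|f\|_{\dot F_{pq}^{\alpha}(\Rn)}$ times the appropriate constant.

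The heart of the argument is a pair of Fourier estimates for $\hat\sigma_k$. On one hand, H\"older against $b$ and integration over the sphere give the trivial bound $|\hat\sigma_k(\xi)|\lesssim 2^{k\rho}\|b\|_{\Delta_\gamma}\|\Omega\|_{L^1(\Sn)}$; in case (i) the cancellation of $\Omega$ upgrades this at low frequency to $|\hat\sigma_k(\xi)|\lesssim 2^{k\rho}\phi(2^k)|\xi|$. On the other hand, doubling the variable $y'\to(y',z')$ as in \cite[p.~551]{DuoRubio} produces a cross-term $(y'-z')\cdot\xi'$ inside an oscillatory integral in the radial variable $r\in[2^k,2^{k+1}]$; after the substitution $s=\phi(r)$, whose Jacobian is controlled by \eqref{eq:uniform}, integration by parts and H\"older yield the high-frequency decay
\[
|\hat\sigma_k(\xi)|\lesssim 2^{k\rho}\,W_\Omega\,\|b\|_{\Delta_{\max(\gamma,2)}}(\phi(2^k)|\xi|)^{-\theta},
\]
with $\theta=\theta(\beta,\gamma)>0$ dictated by \eqref{eq:fracMarSurf-0-4-b}. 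In case (iii) one splits $\Omega=\sum_j\Omega_j$ atomically along the level sets $\{2^j<|\Omega|\le 2^{j+1}\}$, runs the argument for each $\Omega_j$, and sums the resulting contributions in $L\log L$.

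Next, one sets up a smooth Littlewood--Paley partition $\{\psi_k\}_{k\in\Z}$ with $\widehat{\psi_k}$ supported in the annulus $\phi(2^k)|\xi|\sim 1$; the existence of such a partition rests on $2^{1/c_1}\le\phi(2^{k+1})/\phi(2^k)\le c_0$, which is guaranteed by \eqref{eq:doubling} and \eqref{eq:uniform}. Writing $f=\sum_j\psi_j*f$, using the above Fourier estimates to gain geometric decay in $|j-k|$, and invoking the vector-valued almost-orthogonality / Fefferman--Stein machinery reduces matters to
\[
\Bigl\|\Bigl(\sum_{k\in\Z}\phi(2^k)^{-\alpha q}|\psi_k*f|^q\Bigr)^{1/q}\Bigr\|_{L^p(\Rn)}\lesssim \|f\|_{\dot F_{pq}^{\alpha}(\Rn)},
\]
which is exactly the characterization of $\dot F_{pq}^{\alpha}(\Rn)$ in terms of the lacunary sequence $\{\phi(2^k)\}$ promised in the abstract.

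The main obstacle is precisely this Triebel--Lizorkin characterization: the classical theory uses the dyadic sequence $\{2^k\}$, whereas here the successive ratios $\phi(2^{k+1})/\phi(2^k)$ merely lie in the interval $[2^{1/c_1},c_0]$. Transplanting the Peetre maximal function estimate and the square-function equivalence to this non-standard setting produces the factor $c_1\log_2 c_0$ that appears in the admissible range \eqref{eq:121108-202}; the exponents $(1/\tilde p-1/(2\gamma))/(1-1/\gamma)$ and $(1/\tilde q-1/(2\gamma))/(1-1/\gamma)$ arise, via interpolation, from the gap between $p,q$ and the exponent $2\gamma/(2\gamma-1)$ dual to the one governing the Fourier decay of $\hat\sigma_k$. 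Case (ii) follows by the same template with the decay exponent drawn from \eqref{eq:fracMarSurf-0-4-b}, the factor $4\beta$ being replaced by $2\beta$ because of the doubling-variable device (cf. Remark \ref{rm:fracMarc-1-b}), and case (iii) by the $L\log L$ atomic summation. Density of $\mathcal S_\infty(\Rn)$ in $\dot F_{pq}^{\alpha}(\Rn)$ then extends \eqref{eq:fracMarSurf-0-3-b}, \eqref{eq:121108-29} and \eqref{eq:121108-2} to the whole space.
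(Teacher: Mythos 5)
Your proposal follows essentially the same strategy as the paper: reduce to single-shell measures by dyadic decomposition, combine the trivial / cancellation / doubling-variable Fourier estimates for $\widehat{\sigma_t}$ with a Littlewood--Paley partition adapted to the lacunary sequence $\{1/\phi(2^{-j})\}$, interpolate between an $L^r$-bound (via duality and the maximal operator $\sigma^*$) and an $L^2$-bound (via Plancherel) to obtain geometric decay in the shift parameter, and invoke Proposition \ref{lem:Triebel-Liz1} together with the $L\log L$ atomic splitting of $\Omega$ for case (iii). The only notable cosmetic difference is that you keep the cumulative sum $\sum_{k\le m}\sigma_k*f$ and apply Minkowski afterwards, whereas the paper first changes variables $y\mapsto 2^k y$ to push everything onto the single-shell operator $\tilde\mu_{\Omega,\rho,\phi,\alpha,q}^{(b)}$, which makes the geometric factor $2^{-(\rho+\alpha\log_2 c_0)k}$ and hence the constraint $\alpha>-\rho/\log_2 c_0$ in (ii) immediately visible.
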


Theorem \ref{thm:fracMarcSurf-1}(i) and (iii)
are direct consequences of Theorem \ref{thm:fracMarcSurf-1-b}.
Indeed, assuming (\ref{eq:121108-201})
and choosing $\gamma \gg 1$,
we have (\ref{eq:121108-202}).
So, to obtain (i)
we can apply Theorem \ref{thm:fracMarcSurf-1-b}
for such $\gamma$
with $b \equiv 1$.
Theorem \ref{thm:fracMarcSurf-1}(iii)
is a direct conseuqence of 
Theorem \ref{thm:fracMarcSurf-1-b}(iii).
Note that in Theorems \ref{thm:fracMarcSurf-1}(ii)
and \ref{thm:fracMarcSurf-1-b}(ii),
the conditions of $\alpha$ is slightly improved.


We rely upon the modified Littlewood-Paley decomposition
for the proof of Theorem \ref{thm:fracMarcSurf-1-b},
which we shall describe now.
Let $\{a_k\}_{k\in\Z}$ be a lacunary sequence 
of positive numbers in the sense that
$a_{k+1}/a_{k}\ge a>1$ $(k\in\Z)$. 
A sequence $\{\Phi_k\}_{k\in\Z}$ of 
$C^\infty(\Rn)$-functions is said to be a partition of unity 
adapted to $\{a_k\}_{k\in\Z}$ if 
$$\supp \widehat{\Phi_k}\subset \{\xi\in\Rn;\,a_{k-1}\le
 |\xi|\le a_{k+1}\} \quad (k\in\Z),$$ 
$$\sum_{k\in\Z}\widehat{\Phi_k}(\xi)=1 \quad (\xi\in\Rn\setminus \{0\}),
$$ 
and 
$$
|\xi^\beta\partial^\beta \widehat{\Phi_k}(\xi)|\le C_\beta
$$ for any 
multiindex $\beta$.  

Denote by ${\mathcal P}$ the set of all polynomials
in ${\mathbb R}^n$.
Let $1< p,q<\infty$ and $\alpha\in\R$. For $f\in\S'(\Rn)/{\mathcal P}$, we define 
the norm $\|f\|_{\dot F_{pq}^{\alpha,\{\Phi_k\}_{k\in\Z}}(\Rn)}$ by 
\begin{equation}\label{eq:Triebel-Liz1}
\|f\|_{\dot F_{pq}^{\alpha,\{\Phi_k\}_{k\in\Z}}(\Rn)}
=\Bigl\|\Bigl(\sum_{k\in\Z}a_k^{\alpha q}|\Phi_k*f|^q\Bigr)^{1/q}
\Bigr\|_{L^p(\Rn)}.
\end{equation}

We admit that Proposition \ref{lem:Triebel-Liz1} below
is true and we prove
Theorem \ref{thm:fracMarcSurf-1-b} first.
We postpone the proof of Proposition \ref{lem:Triebel-Liz1}
until the end of the paper.
\begin{proposition}\label{lem:Triebel-Liz1}
Let  $\alpha\ne0$ and $1<p,q<\infty$. 
Let $\{a_k\}_{k\in\Z}$ be a lacunary sequence of positive numbers 
with $a_{k+1}/a_{k}\ge a>1$ $(k\in\Z)$. 
If $\|f\|_{\dot F_{pq}^{\alpha,\{\Phi_k\}_{k\in\Z}}(\Rn)}$ and 
$\|f\|_{\dot F_{pq}^{\alpha,\{\Psi_k\}_{k\in\Z}}(\Rn)}$ 
are equivalent for any two partitions of unity, 
$\{\Phi_k\}_{k\in\Z}$ and $\{\Psi_k\}_{k\in\Z}$, adapted 
to $\{a_k\}_{k\in\Z}$, 
then there exists $C_0>a$ such that 
\begin{equation*}
\frac{a_{k+1}}{a_k}\le C_0\qquad(k\in\Z),
\end{equation*}
and, in this case, 
$\|f\|_{\dot F_{pq}^{\alpha,\{\Phi_k\}_{k\in\Z}}(\Rn)}$ 
is equivalent to the usual homogeneous Triebel-Lizorkin space norm 
$\|f\|_{\dot F_{pq}^{\alpha}(\Rn)}$. 
\end{proposition}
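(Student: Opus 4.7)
My approach splits Proposition~\ref{lem:Triebel-Liz1} into two independent claims: (a) partition-independence of the norm forces $C_0:=\sup_k a_{k+1}/a_k<\infty$, and (b) once $a\leq a_{k+1}/a_k\leq C_0$ holds, the norm agrees with $\|\cdot\|_{\dot F_{pq}^{\alpha}(\Rn)}$ up to constants. Part (b) is a standard Littlewood--Paley comparison; the essential content is (a), which I treat by contrapositive.

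If $\sup_k a_{k+1}/a_k=\infty$, I pick a sparse subsequence $k_{n+1}\geq k_n+10$ with $R_n:=a_{k_n+1}/a_{k_n}\to\infty$. Fix two smooth radial cutoffs $\eta_1,\eta_2:\R_+\to[0,1]$, each identically $1$ near $0$ and supported in $[0,1]$, with narrow disjoint transition intervals (sized so as to be compatible with the given $a$), and define
\[
\widehat{\Phi_k}(\xi):=\eta_1(|\xi|/a_{k+1})-\eta_1(|\xi|/a_k),\qquad
\widehat{\Psi_k}(\xi):=\eta_2(|\xi|/a_{k+1})-\eta_2(|\xi|/a_k).
\]
Telescoping gives $\sum_k\widehat{\Phi_k}\equiv\sum_k\widehat{\Psi_k}\equiv 1$ on $\Rn\setminus\{0\}$, and with the transitions sufficiently narrow both $\supp\widehat{\Phi_k}\subset\{a_{k-1}\leq|\xi|\leq a_{k+1}\}$ and the H\"ormander bound $|\xi^\beta\partial^\beta\widehat{\Phi_k}|\leq C_\beta$ hold uniformly in $k$. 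Choose a test radius $r_n\in(a_{k_n},a_{k_n+1})$ at which $\eta_1$ and $\eta_2$ take opposite extreme values, so that on the sphere $\{|\xi|=r_n\}$ only $\widehat{\Phi_{k_n+1}}\equiv 1$ in the first partition and only $\widehat{\Psi_{k_n}}\equiv 1$ in the second, while all other Littlewood--Paley pieces vanish there (the spacing $k_{n+1}\geq k_n+10$ rules out interference). A Schwartz $f_n$ with $\hat{f_n}$ concentrated in a small ball around $r_n e$, for a fixed unit vector $e$, then satisfies
\[
\|f_n\|_{\dot F_{pq}^{\alpha,\{\Phi_k\}}}\asymp a_{k_n+1}^\alpha\|f_n\|_{L^p},\qquad
\|f_n\|_{\dot F_{pq}^{\alpha,\{\Psi_k\}}}\asymp a_{k_n}^\alpha\|f_n\|_{L^p},
\]
so the ratio $R_n^\alpha$ is unbounded for $\alpha>0$ and tends to zero for $\alpha<0$, contradicting equivalence.

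For (b), define $j(k)\in\Z$ by $2^{j(k)}\leq a_k<2^{j(k)+1}$. The bounded ratios $a\leq a_{k+1}/a_k\leq C_0$ imply that $k\mapsto j(k)$ is non-decreasing with uniformly bounded fibers and uniformly bounded gaps between successive values. Compare $\{\Phi_k\}$ with a standard dyadic partition $\{\phi_j\}_{j\in\Z}$ (with $\supp\widehat{\phi_j}\subset\{2^{j-1}\leq|\xi|\leq 2^{j+1}\}$): each $\widehat{\Phi_k}$ is supported in a union of boundedly many dyadic annuli around scale $2^{j(k)}$, and each $\widehat{\phi_j}$ meets the supports of boundedly many $\widehat{\Phi_k}$'s. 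Standard Peetre-maximal-function estimates yield a pointwise bound of the form $|\Phi_k*f|\lesssim\sum_{|j-j(k)|\leq K}M(\phi_j*f)$, and together with $a_k^\alpha\asymp 2^{j(k)\alpha}$ and the Fefferman--Stein $L^p(\ell^q)$-vector-valued maximal inequality this produces $\|f\|_{\dot F_{pq}^{\alpha,\{\Phi_k\}}}\lesssim\|f\|_{\dot F_{pq}^\alpha}$; the reverse inequality is symmetric.

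The principal obstacle is the construction of two ``separating'' partitions in part (a): they must satisfy both the exact support constraints and the partition-of-unity identity, while being forced to disagree at an unbounded sequence of scales. The telescoping ansatz handles the partition-of-unity identity automatically, and the freedom to place transition regions anywhere inside the wide annuli $[a_{k_n},a_{k_n+1}]$ (which is available once $R_n$ is sufficiently large) produces the desired disagreement. Part (b) is then a routine consequence of the standard Triebel--Lizorkin theory.
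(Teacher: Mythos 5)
Your proposal is correct and follows essentially the same route as the paper's proof. For part (a) the paper also builds two partitions from a rescaled radial bump with shifted transition regions (using $\chi_{B(1)}\le\psi\le\chi_{B(a^{1/3})}$ versus $\chi_{B(a^{-1/3})}\le\psi\le\chi_{B(1)}$), takes a test function $f_k$ whose Fourier transform sits in a ball at scale $a_k$ where exactly one $\widehat\Phi$-piece and exactly one $\widehat\Psi$-piece (with shifted indices) equal one, and reads off a norm ratio of exactly $(a_{k+1}/a_k)^\alpha$; since $\alpha\neq 0$, equivalence forces boundedness. For part (b), the paper compares with the lacunary partition built on powers $a^k$, defines $m_k$ by $a^{m_k}\le a_k<a^{m_k+1}$, and runs the Plancherel--Polya--Nikolskij / Fefferman--Stein machinery in exactly the way you sketch with $2^{j}$ in place of $a^{k}$. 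Two small remarks. First, your detour through the contrapositive and a sparse subsequence $k_{n+1}\ge k_n+10$ is unnecessary and your justification (``rules out interference'') is misplaced: the disjointness of the relevant Littlewood--Paley pieces on the test annulus is forced by the support constraint $\supp\widehat{\Phi_j}\subset\{a_{j-1}\le|\xi|\le a_{j+1}\}$ alone, for \emph{every} $k$, so the direct argument already gives $(a_{k+1}/a_k)^\alpha\le C$ for all $k$. Second, when you claim the two partitions disagree on the test sphere, be explicit that you place the transition interval of $\eta_1$ strictly to the left of that of $\eta_2$ (both inside $(1/a,1)$ and each of proportional length at most $a$), and then pick $r_n$ between the two scaled transition zones; this guarantees the disagreement uniformly, without appealing to $R_n\to\infty$.
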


In Sections \ref{section:3}--\ref{section:5}, 
we shall prove Theorems 
\ref{thm:fracMarcSurf-1} 
and
\ref{thm:fracMarcSurf-1-b}
as well as Proposition \ref{lem:Triebel-Liz1},
respectively.


\section{A strategy of the proof of Theorem \ref{thm:fracMarcSurf-1-b}}
\label{section:2}

\subsection{A setup}

For $t>0$,
a function $b$ on $\R_+$
and a homogeneous kernel $\Omega$ on $\Rn$, 
assume
\[
\int_{B(t) \setminus B(t/2)}|b(|x|)\Omega(x')|\,dx
<\infty.
\]
For $\rho>0$ and a nice function,
we define the family $\{\sigma_{t};\,t\in \mathbb R_+\}$ of measures 
and the maximal operator $\sigma^*$ on $\mathbb R^n$ by
\begin{align}
\int_{\Rn}f(x)\,d\sigma_{t}(x)&
=\frac{1}{t^\rho}\int_{B(t) \setminus B(t/2)}
f\bigl(\phi(|x|)x')\frac{b(|x|)\Omega(x')}{|x|^{n-\rho}}\,dx,
\label{eq:mu-OmegaSurf-2-2}
\\
\sigma^* f(x)&=\sup_{t>0}\bigl||\sigma_t|*f(x)\bigr|
\quad(x \in \Rn).
\label{eq:sigma-max}
\end{align}
Note that the mapping
$
x \in {\mathbb R}^n \setminus \{0\} 
\mapsto \phi(|x|)x' \in {\mathbb R}^n \setminus \overline{B(\inf \phi)} 
$
is a $C^1$-diffeomorphism,
since $\phi \in C^1(\R_+,\R_+)$ 
satisfies \eqref{eq:doubling} and \eqref{eq:uniform}.
Therefore,
if we consider the measure $\sigma^\dagger_t$
by
\[
\int_{\Rn}f(x)\,d\sigma^\dagger_{t}(x)
=\frac{1}{t^\rho}\int_{B(t) \setminus B(t/2)}
f\bigl(x)\frac{b(|x|)\Omega(x')}{|x|^{n-\rho}}\,dx,
\]
then the above diffeomorphism induces $\sigma_t$.
So, about the absolute value of $\sigma_t$,
we have
\[
\int_{\Rn}f(x)\,d|\sigma_{t}|(x)
=\frac{1}{t^\rho}\int_{B(t) \setminus B(t/2)}
f\bigl(\phi(|x|)x')\frac{|b(|x|)\Omega(x')|}{|x|^{n-\rho}}\,dx.
\]
A direct consequence of this alternative definition
of $|\sigma_{t}|$ is that
we have
\begin{equation}\label{eq:121130-1}
\|\sigma_t\|
\le C
\|b\|_{\Delta_1}
\|\Omega\|_{L^1}.
\end{equation}
If we use (\ref{eq:mu-OmegaSurf-2-2}),
then we can write
\begin{equation}\label{eq:mu-OmegaSurf-3-2}
\tilde\mu_{\Omega,\alpha,\rho,q}^{(b)}(f)(x)
=\biggl(\int_{0}^{\infty}|\sigma_t*f(x)|^q\frac{dt}{t\phi(t)^{q\alpha}}
\biggr)^{1/q}\quad(x \in \Rn).
\end{equation}

\begin{lemma}\label{lem:121108-0}
Let $\Omega \in L^1(\Sn)$.
\begin{enumerate}
\item
For all admissible parameters,
\begin{align}\label{eq:FTsigma-1}
|\widehat {\sigma_t}(\xi)|&\le
2^{n-\rho}\|\Omega\|_{L^1(\Sn)}\|b\|_{\Delta_1}
\quad(t>0, \, \xi\in\Rn).
\end{align}
\item
If in addition $\Omega$ satisfies
$(\ref{eq:cancellation})$,
then we have
\begin{equation}\label{eq:FTsigma-canc-1}
|\widehat {\sigma_t}(\xi)|
\le 2\|\Omega\|_{L^1(\Sn)}\|b\|_{\Delta_1}\phi(t)|\xi|
\quad(t>0, \, \xi\in\Rn).
\end{equation}
\end{enumerate}
\end{lemma}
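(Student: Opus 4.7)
Both bounds should follow from direct computation using the explicit formula for $|\sigma_t|$ displayed just before the lemma, the fact that $|x|\ge t/2$ on the support of $\sigma_t$, the Sobolev-type averaging bound $\int_0^R|b(r)|\,dr\le R\,\|b\|_{\Delta_1}$, and, for part (ii), the standard linearization trick $|e^{i\theta}-1|\le|\theta|$.

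For part (i), the plan is to start from $|\widehat{\sigma_t}(\xi)|\le\|\sigma_t\|_M=\int_{\Rn}d|\sigma_t|$ and use the expression for $|\sigma_t|$ given in the text. Bounding the denominator $|x|^{n-\rho}$ from below by $(t/2)^{n-\rho}$ and passing to polar coordinates $x=r y'$, one finds
\[
\|\sigma_t\|_M\le\frac{2^{n-\rho}}{t^{n-\rho}}\,\|\Omega\|_{L^1(\Sn)}\int_{t/2}^{t}|b(r)|\,r^{n-1}\,dr
\le 2^{n-\rho}\,\|\Omega\|_{L^1(\Sn)}\,\|b\|_{\Delta_1},
\]
where the last inequality uses $r^{n-1}\le t^{n-1}$ and $\int_0^t|b(r)|\,dr\le t\,\|b\|_{\Delta_1}$. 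This gives (\ref{eq:FTsigma-1}).

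For part (ii), expanding $\widehat{\sigma_t}(\xi)$ into polar coordinates,
\[
\widehat{\sigma_t}(\xi)=\frac{1}{t^\rho}\int_{t/2}^{t}b(r)\,r^{\rho-1}\int_{\Sn}e^{-i\phi(r)\,\xi\cdot y'}\,\Omega(y')\,d\sigma(y')\,dr,
\]
one uses the cancellation condition $(\ref{eq:cancellation})$ to subtract off the constant 1 inside the exponential. Then $|e^{-i\phi(r)\xi\cdot y'}-1|\le\phi(r)|\xi|$, and the monotonicity of $\phi$ (from $\phi'\ge0$, cf.\ \eqref{eq:uniform}) yields $\phi(r)\le\phi(t)$ for $r\le t$. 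Hence
\[
|\widehat{\sigma_t}(\xi)|\le\frac{\phi(t)|\xi|\,\|\Omega\|_{L^1(\Sn)}}{t^\rho}\int_{t/2}^{t}|b(r)|\,r^{\rho-1}\,dr,
\]
and the outer integral is handled exactly as in part (i) to produce the desired constant.

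No real obstacle is anticipated; the arguments are routine, and the only care needed is in keeping track of the numerical constant arising from the $|x|\ge t/2$ lower bound versus the outer factor $t^{\rho}$.
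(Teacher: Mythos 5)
Your proof is correct and follows essentially the same route as the paper: part (i) is a direct total-mass estimate on $|\sigma_t|$, and part (ii) uses the cancellation of $\Omega$ to insert $e^{-i\phi(r)\xi\cdot y'}-1$, the bound $|e^{i\theta}-1|\le|\theta|$, and the monotonicity of $\phi$. One small bookkeeping slip: in the displayed chain for (i) the prefactor should be $2^{n-\rho}/t^{n}$ rather than $2^{n-\rho}/t^{n-\rho}$, since the $1/t^{\rho}$ from the definition of $\sigma_t$ was dropped; with that correction, the step $r^{n-1}\le t^{n-1}$ and $\int_0^{t}|b|\le t\|b\|_{\Delta_1}$ indeed yields $2^{n-\rho}\|\Omega\|_{L^1(\Sn)}\|b\|_{\Delta_1}$ as claimed.
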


\begin{proof}
\quad
\begin{enumerate}
\item
From the definition of the Fourier transform,
we have an expression of $\widehat {\sigma_t}(\xi)$;
\begin{align}
\widehat {\sigma_t}(\xi)&=\frac{1}{t^\rho}\int_{B(t) \setminus B(t/2)}
\frac{b(|y|)\Omega(y')}{|y|^{n-\rho}}e^{-i\phi(|y|)y'\cdot \xi}dy.
\label{eq:FTsigma}
\end{align}
From (\ref{eq:FTsigma}) we get (\ref{eq:FTsigma-1}).
\item
Using the cancellation property (\ref{eq:cancellation})
of $\Omega$, we have another expression 
of $\widehat {\sigma_t}(\xi)$;
\begin{align}
\widehat {\sigma_t}(\xi)
&=\frac{1}{t^\rho}\int_{B(t)\setminus B(t/2)}
\frac{b(|y|)\Omega(y')}{|y|^{n-\rho}}
\bigl(e^{-i\phi(|y|)y'\cdot \xi}-1\bigr)\,dy.\label{eq:FTsigma-canc}
\end{align}
From the monotonicity of $\phi$,
(\ref{eq:doubling}) and (\ref{eq:FTsigma-canc}) we obtain
\begin{align*}
|\widehat {\sigma_t}(\xi)|
&\le\frac{1}{t^\rho}\int_{t/2}^{t}
\left(\int_{\Sn}|\Omega(y')|d\sigma(y')\right)\,
|\xi|\cdot|\phi(r)b(r)|r^{\rho-1}dr
\\
&\le \|\Omega\|_{L^1(\Sn)}\phi(t){|\xi|}\int_{t/2}^{t}|b(r)|\frac{dr}{r}\notag
\\
&\le 2\|\Omega\|_{L^1(\Sn)}\|b\|_{\Delta_1}\phi(t)|\xi|.
\notag
\end{align*}
So we are done.
\end{enumerate}
\end{proof}
\par

As for the maximal operator $\sigma^*$ given by (\ref{eq:sigma-max}), 
we invoke the following lemma
in \cite[Lemma 3.2]{DingXueY1}:
We define the directional Hardy-Littlewood maximal function
of $F$ for a fixed vector $y' \in S^{n-1}$ by
\[
M_{y'}F(x)
=
\sup_{r>0}\frac{1}{2r}\int_{-r}^r |f(x-ty')|\,dt.
\]
By the orthogonal decomposition
$\Rn=H \oplus \R y'$,
we can prove that $M_{y'}$ is bounded on $L^p(\Rn)$
for all $1<p<\infty$ and that the bound 
is uniform over $y'$.
By combining the H\"{o}lder inequality
and the change of variables to polar coordinates,
we can prove;
\begin{lemma} \label{lem:121030-2}
Let $\gamma>1$.
Then there exists $C>0$ such that
\begin{equation}\label{eq:sigma-max-HL}
\sigma^*(f)(x)
\le C\|b\|_{\Delta_\gamma}\|\Omega\|_{L^1(S^{n-1})}^{1/\gamma}
\biggl(\int_{S^{n-1}}|\Omega(y')|M_{y'}(|f|^{\gamma'})(x)\,d\sigma(y')
\biggr)^{1/{\gamma'}}
\end{equation}
for all $x \in \Rn$.
\end{lemma}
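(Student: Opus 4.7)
The plan is to express $||\sigma_t|*f(x)|$ in polar coordinates and then apply H\"older's inequality twice --- first in the radial variable $r$, then on the sphere $\Sn$. Writing $y=ry'$ in the polar decomposition of the total variation measure $|\sigma_t|$ whose formula appears just before \eqref{eq:121130-1}, one obtains
\[
||\sigma_t|*f(x)|
\le \frac{1}{t^\rho}\int_{\Sn}|\Omega(y')|\int_{t/2}^t |f(x-\phi(r)y')||b(r)|r^{\rho-1}\,dr\,d\sigma(y').
\]
For each fixed $y'$, H\"older's inequality in $r$ with exponents $(\gamma,\gamma')$, after splitting $r^{\rho-1}=r^{(\rho-1)/\gamma}\cdot r^{(\rho-1)/\gamma'}$, bounds the inner integral by the product of $\bigl(\int_{t/2}^t |b(r)|^\gamma r^{\rho-1}\,dr\bigr)^{1/\gamma}$ and $\bigl(\int_{t/2}^t |f(x-\phi(r)y')|^{\gamma'}r^{\rho-1}\,dr\bigr)^{1/\gamma'}$. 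Since $r\asymp t$ on $(t/2,t)$, the first factor is controlled by $Ct^{\rho/\gamma}\|b\|_{\Delta_\gamma}$ directly from the definition of $\Delta_\gamma$.

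The key technical step is the estimate of the second factor, which reduces to bounding $\int_{t/2}^t |f(x-\phi(r)y')|^{\gamma'}\,dr$ by $Ct\,M_{y'}(|f|^{\gamma'})(x)$. For this I would change variables $s=\phi(r)$ and use the two structural hypotheses on $\phi$: the inequality \eqref{eq:uniform} gives $1/\phi'(r)\le c_1 r/\phi(r)$, while the doubling condition \eqref{eq:doubling} yields $\phi(r)\ge c_0^{-1}\phi(t)$ on $(t/2,t)$. Together these force the Jacobian to satisfy $1/\phi'(r)\le Ct/\phi(t)$ uniformly on the relevant interval; since $[\phi(t/2),\phi(t)]\subset[-\phi(t),\phi(t)]$, the resulting $s$-integral is at most $2\phi(t)\,M_{y'}(|f|^{\gamma'})(x)$, and the two $\phi(t)$'s cancel. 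Combining the two radial factors and cancelling $t^\rho$ against the prefactor $1/t^\rho$, one arrives at
\[
||\sigma_t|*f(x)|
\le C\|b\|_{\Delta_\gamma}\int_{\Sn}|\Omega(y')|\,M_{y'}(|f|^{\gamma'})(x)^{1/\gamma'}\,d\sigma(y').
\]

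The final step is a second application of H\"older's inequality on $\Sn$, writing $|\Omega(y')|=|\Omega(y')|^{1/\gamma}\cdot|\Omega(y')|^{1/\gamma'}$; this produces the factor $\|\Omega\|_{L^1(\Sn)}^{1/\gamma}$ and the desired $\Sn$-integral raised to the power $1/\gamma'$. Taking the supremum over $t>0$ then yields \eqref{eq:sigma-max-HL}. The main obstacle is essentially bookkeeping rather than conceptual: one must use both \eqref{eq:doubling} and \eqref{eq:uniform} in tandem so that all $\phi$-factors introduced by the substitution $s=\phi(r)$ cancel cleanly, leaving a constant depending only on $c_0$, $c_1$, $n$, and $\gamma$.
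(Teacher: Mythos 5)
Your proof is correct and fills in exactly the steps the paper sketches for this lemma (``combining the H\"older inequality and the change of variables to polar coordinates,'' attributed to \cite{DingXueY1}): polar decomposition, H\"older in $r$ with exponents $(\gamma,\gamma')$ to extract $\|b\|_{\Delta_\gamma}$, the substitution $s=\phi(r)$ together with \eqref{eq:uniform} and \eqref{eq:doubling} to control the Jacobian and reach $M_{y'}(|f|^{\gamma'})(x)$, and a final H\"older on $\Sn$ to produce $\|\Omega\|_{L^1(\Sn)}^{1/\gamma}$. No gaps; the constant depends only on $c_0,c_1,\gamma$ as you say.
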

Thanks to Lemma \ref{lem:121030-2} and the Minkowski inequality, 
for $p>\gamma'$ there exists $C>0$ such that
\begin{equation}\label{eq:sigma-maxLp}
\|\sigma^*(f)\|_{L^p(\Rn)}
\le C\|b\|_{\Delta_\gamma}\|\Omega\|_{L^1(S^{n-1})}\|f\|_{L^p(\Rn)}.
\end{equation}

From the monotonicity,
\eqref{eq:doubling} and \eqref{eq:FTsigma-canc-1} we get, for 
$\alpha\in\R,\,k\in\Z$,
\begin{equation}\label{eq:mu-j-7-6} 
\biggl(\int_{2^k}^{2^{k+1}}|\widehat {\sigma_t}(\xi)|^2
\frac{dt}{t\phi(t)^{2\alpha}}\biggr)^{1/2}
\le 2\|\Omega\|_{L^1(\Sn)}\|b\|_{\Delta_1}|\xi|
\frac{\phi(2^{k})}{\phi(2^k)^{\alpha}}.
\end{equation}
Using \eqref{eq:doubling} and \eqref{eq:uniform}, 
we have;
\begin{lemma} \label{lem:121030-20}
For any $0\leq\beta< 1$,
\begin{equation}\label{eq:mu-j-7-3-1-b}
|\widehat {\sigma_t}(\xi)|
\le C
W_\Omega\|b\|_{\Delta_2}
\frac1{(|\xi|\phi(t))^{\beta/2}}
\end{equation}
for $\xi \in \Rn$.
\end{lemma}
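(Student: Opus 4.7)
The plan is to adapt the classical Duoandikoetxea--Rubio~de~Francia oscillatory-integral technique for rough kernels to the present surface setting; the extra ingredients are the doubling condition \eqref{eq:doubling} and the uniformity condition \eqref{eq:uniform}, which together permit a change of variables $u=\phi(r)$ with Jacobian controlled independently of $t$. Passing to polar coordinates in \eqref{eq:FTsigma} writes
\[
\widehat{\sigma_t}(\xi)=\frac{1}{t^\rho}\int_{t/2}^{t}b(r)\,r^{\rho-1}\Psi(\phi(r)\xi)\,dr,\qquad \Psi(\eta):=\int_{S^{n-1}}\Omega(y')e^{-iy'\cdot\eta}\,d\sigma(y'),
\]
and the Cauchy--Schwarz inequality in $r$ splits $|\widehat{\sigma_t}(\xi)|^{2}$ into a $b$-factor times a $\Psi$-factor.

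The $b$-factor $\frac{1}{t^{2\rho}}\int_{t/2}^{t}|b(r)|^{2}r^{\rho-1}\,dr$ is bounded by $C\|b\|_{\Delta_2}^{2}t^{-\rho}$ since $r\asymp t$ on $[t/2,t]$ and $b\in\Delta_2$. For the $\Psi$-factor, substitute $u=\phi(r)$: the hypothesis \eqref{eq:uniform} gives $dr\le c_1 r\phi(r)^{-1}du$, and \eqref{eq:doubling} further ensures $\phi(r)\ge \phi(t)/c_0$ on $[t/2,t]$, so the Jacobian is dominated by $Ct/\phi(t)$. Thus
\[
\int_{t/2}^{t}r^{\rho-1}|\Psi(\phi(r)\xi)|^{2}dr\le \frac{Ct^\rho}{\phi(t)}\int_{\phi(t/2)}^{\phi(t)}|\Psi(u\xi)|^{2}du.
\]
Expand $|\Psi(u\xi)|^{2}$ as $\int\!\!\int \Omega(y')\overline{\Omega(z')}e^{-iu(y'-z')\cdot\xi}\,d\sigma(y')d\sigma(z')$, push the $u$-integration inward by Fubini, and use the elementary bound $\bigl|\int_{\phi(t/2)}^{\phi(t)}e^{-iu\lambda}du\bigr|\le\min(\phi(t),2/|\lambda|)$ followed by $\min(A,B)\le A^{1-\beta}B^{\beta}$. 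This produces the weight $C\phi(t)^{1-\beta}/|(y'-z')\cdot\xi|^{\beta}$; writing $\xi=|\xi|\xi'$ and invoking the definition \eqref{eq:fracMarSurf-0-4-b} of $W_\Omega$ then bounds the $\Psi$-factor by $C\phi(t)^{-\beta}t^\rho W_\Omega^{2}|\xi|^{-\beta}$.

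Multiplying the two factors gives $|\widehat{\sigma_t}(\xi)|^{2}\le C W_\Omega^{2}\|b\|_{\Delta_2}^{2}(|\xi|\phi(t))^{-\beta}$, which is the desired estimate after taking square roots. The main obstacle to watch out for is the Jacobian computation for $u=\phi(r)$: one must invoke \emph{both} \eqref{eq:doubling} and \eqref{eq:uniform} simultaneously in order to produce a bound of the form $dr/du\le Ct/\phi(t)$ with constant depending only on $c_0$ and $c_1$; the monotonicity assumption \eqref{eq:monotone} is not used at this stage. Everything else is a routine combination of Cauchy--Schwarz and the elementary estimate on $\int e^{-iu\lambda}\,du$.
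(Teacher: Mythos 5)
Your proof is correct. The paper states Lemma~\ref{lem:121030-20} and refers the reader to \cite[Lemma~2.4]{DingXueY1} rather than supplying a proof, but your argument is exactly the standard Duoandikoetxea--Rubio~de~Francia scheme that Remark~\ref{rm:fracMarc-1-b} identifies as the relevant tool, and it fills the gap cleanly: Cauchy--Schwarz in $r$ to extract $\|b\|_{\Delta_2}$ (a step the paper carries out explicitly at the start of the proof of Theorem~\ref{thm:fracMarcSurf-1-b}(ii), there with measure $dr/r$, equivalent to your $r^{\rho-1}\,dr$ since $r\asymp t$ on $[t/2,t]$); the change of variables $u=\phi(r)$, with \eqref{eq:uniform} and \eqref{eq:doubling} jointly bounding the Jacobian by $Ct/\phi(t)$; expansion of $|\Psi(u\xi)|^2$ as a double integral over $S^{n-1}\times S^{n-1}$; the elementary bound on $\int_{\phi(t/2)}^{\phi(t)}e^{-iu\lambda}\,du$ together with $\min(A,B)\le A^{1-\beta}B^{\beta}$; and homogeneity in $\xi$ to bring in $W_\Omega$. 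Your closing observation that \eqref{eq:monotone} plays no role here is also right --- that hypothesis enters only in the Section~\ref{section:4} argument for Theorem~\ref{thm:fracMarcSurf-1}(ii), which avoids Cauchy--Schwarz and instead estimates $\widehat{\sigma_t}$ directly via the second mean value theorem.
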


For a precise proof, 
see the proof of \cite[Lemma 2.4]{DingXueY1}.

\subsection{Properties of $\phi$}

We denote $a_j:=1/\phi(2^{-j})$ and 
$a:=2^{1/\|\varphi\|_{L^\infty(\R_+)}}>1$. 
Then $\{a_j\}_{j\in\Z}$ is 
also a lacunary sequence of the same lacunarity 
as $\{\phi(2^j)\}_{j\in\Z}$. 
From the assumption (\ref{eq:doubling}), it follows that 
\begin{equation}\label{eq:121105-1000}
\phi(2^kt)\le c_0^k\phi(t)
\end{equation}
for $k\in\N$. 
It is easily seen from (\ref{eq:uniform})
that $\{\phi(2^j)\}_{j\in\Z}$ is a lacunary sequence of 
positive numbers satisfying 
\begin{equation}\label{eq:121105-1}
\phi(2^kt)=
\phi(t)
\int_{t}^{2^kt}\left(\log \phi(s)\right)'\,ds
\ge 2^{k/\|\varphi\|_{L^\infty({\mathbb R}_+)}}\phi(t)
=a^k\phi(t) \quad (t>0)
\end{equation}
for $k\in\N$. 
See e.g. \cite[Lemma 2.8]{DingXueY1}
for details.

Note also that,
for $\phi \in C^1(\R_+,\R_+)$ satisfying (\ref{eq:doubling}),
the condition (\ref{eq:uniform}) implies
\begin{equation}\label{eq:uniform-2}
\phi(2t) \ge C_1\phi(t)
\quad(t>0)
\end{equation}
for some $C_1>1$.
Indeed, assuming (\ref{eq:doubling}),
there exists $s \in[t,2t]$
\[
\phi(2t)-\phi(t)
=
t\phi'(s)
\ge
c_1\frac{t}{s}\phi(s) 
\ge
\frac{c_1}{c_0}\phi(t)
\]
by the mean value theorem,
proving (\ref{eq:uniform-2}).

If in addition $\phi$ is concave,
then (\ref{eq:uniform-2}) implies (\ref{eq:uniform}).
Indeed,
\[
\phi'(2t) \ge \frac{\phi(2t)-\phi(t)}{t}
\ge (C_1-1)\frac{\phi(t)}{t} \quad (t>0).
\]
\subsection{Construction of partition of unity}

For our purpose, we introduce a partition of unity and a characterization of 
the homogeneous Triebel-Lizorkin spaces associated to $\phi$
satisfying \eqref{eq:doubling} and \eqref{eq:uniform}. 
 
Take a nonincreasing $C^\infty\bigl(\R)$-function $\eta$ 
such that $\chi_{[-1/a,1/a]}(t) \le \eta(t) \le \chi_{[-1,1]}(t)$
for all $t \in \R$.\\
We define functions $\psi_j$ on $\Rn$ by
\begin{equation}\label{eq:TL-unit-decomp0}
\psi_j(\xi)
=\eta\Bigl(\frac{|\xi|}{a_{j+1}}\Bigr)
-\eta\Bigl(\frac{|\xi|}{a_{j}}\Bigr)
\quad(\xi \in \Rn).
\end{equation}
Then observe that
\begin{equation}\label{eq:TL-unit-decomp1}
\psi_j(\xi)=\begin{cases}
0, &0\le |\xi|\le a_j/a,\ |\xi|\ge aa_{j+1}, \\
1, & a\,a_j\le t\le a_{j+1},
\end{cases}
\end{equation}
and that
\begin{align}
&\supp \psi_j\subset \{a_{j}/a\le|\xi|\le aa_{j+1}\},
\label{eq:TL-unit-decomp2}
\\
\quad
&\supp \psi_j\cap\supp \psi_\ell=\emptyset,\text{ for }|j-\ell|\ge2.
\label{eq:TL-unit-decomp3}
\\
&\sum_{j\in\Z}\psi_j(\xi)=1\quad(\Rn\setminus \{0\}).
\label{eq:TL-unit-decomp4}
\end{align}
That is, $\{\psi_j\}_{j \in \Z}$ 
is a smooth partition of unity adapted to 
$\{a_j\}_{j \in \Z}$. 

Let $\Psi_j$ be defined on $\Rn$ by $\widehat{\Psi_j}(\xi)=\psi_j(\xi)$
for $\xi \in \Rn$.
By Proposition \ref{lem:Triebel-Liz1},
we have
\begin{equation}\label{eq:TL-unit-decomp5}
\biggl\|\biggl(\sum_{j=-\infty}^\infty|a_j^{\alpha}\Psi_j*f|^q\biggr)^{1/q}
\biggr\|_{L^p}\approx \|f\|_{\dot F_{pq}^\alpha(\Rn)}.
\end{equation}
if $a_{j+1}/a_j\le b$ $(j\in\Z)$ for some $b\ge a$. 

This condition is satisfied in our case, i.e. 
$a_{j+1}/a_j=\phi(2^{-j})/\phi(2^{-j-1})\le c_1$. \par

\subsection{A reduction by using the scaling invariance}

Now, using the definition of $\mu_{\Omega,\rho,\phi,\alpha,q}^{(b)}(f)(x)$ and 
the triangle inequality, via change of variables
$y \mapsto 2^ky$, we obtain 
\begin{align*}
&\mu_{\Omega,\rho,\phi,\alpha,q}^{(b)}(f)(x)
=\biggl(\int_{0}^{\infty}\biggl|\frac{1}{t^\rho\phi(t)^\alpha}\int_{B(t)}
\frac{b(|y|)\Omega(y')}{|y|^{n-\rho}}f\bigl(x-\phi(|y|)y'\bigr)\,dy\biggr|^q
\frac{dt}{t}\biggr)^{1/q}
\\
&=\biggl(\int_{0}^{\infty}\biggl|
\sum_{k=0}^{\infty}\frac{1}{t^\rho\phi(t)^\alpha}
\int_{B(2^{-k}t) \setminus B(2^{-k-1}t)}
\frac{b(|y|)\Omega(y')}{|y|^{n-\rho}}f\bigl(x-\phi(|y|)y'\bigr)\,dy\biggr|^q
\frac{dt}{t}\biggr)^{1/q}
\\
&\le\sum_{k=0}^{\infty}\biggl(
\int_{0}^{\infty}\biggl|\frac{1}{t^\rho\phi(t)^\alpha}
\int_{B(2^{-k}t) \setminus B(2^{-k-1}t)}
\frac{b(|y|)\Omega(y')}{|y|^{n-\rho}}f\bigl(x-\phi(|y|)y'\bigr)\,dy\biggr|^q
\frac{dt}{t}\biggr)^{1/q}
\\
&=\sum_{k=0}^{\infty}\frac{1}{2^{\rho k}}\biggl(
\int_{0}^{\infty}\biggl|\frac{1}{t^\rho\phi(2^kt)^\alpha}
\int_{B(t)\setminus B(t/2)}
\frac{b(2^k|y|)\Omega(y')}{|y|^{n-\rho}}f\bigl(x-\phi(2^k|y|)y'\bigr)\,dy
\biggr|^q
\frac{dt}{t}\biggr)^{1/q}.
\end{align*}
Hence
\begin{align}
\lefteqn{
\label{eq:121030-1}\quad
\mu_{\Omega,\rho,\phi,\alpha,q}^{(b)}(f)(x)
}\\ 
\nonumber
&
\le\sum_{k=0}^{\infty}\frac{1}{2^{\rho k}}
\biggl(
\int_{0}^{\infty}\biggl|\frac{1}{t^\rho\phi(2^kt)^\alpha}
\int_{B(t)\setminus B(t/2)}
\frac{b(2^k|y|)\Omega(y')}{|y|^{n-\rho}}
f\bigl(x-\phi(2^k|y|)y'\bigr)\,dy\biggr|^q
\frac{dt}{t}\biggr)^{1/q}.
\end{align}
So, in the case $\alpha\ge0$ we have 
\begin{align*}
&\mu_{\Omega,\rho,\phi,\alpha,q}^{(b)}(f)(x)
\le\sum_{k=0}^{\infty}\frac{1}{2^{(\rho+\alpha/\|\varphi\|_\infty)k}}
\\
&\quad \quad\times\biggl(
\int_{0}^{\infty}\biggl|\frac{1}{t^\rho\phi(t)^\alpha}
\int_{B(t)\setminus B(t/2)}
\frac{b(2^k|y|)\Omega(y')}{|y|^{n-\rho}}
f\bigl(x-\phi(2^k|y|)y'\bigr)\,dy\biggr|^q
\frac{dt}{t}\biggr)^{1/q}.
\end{align*}
So, in the case $0>\alpha>-\rho/\log c_0$, from (\ref{eq:121030-1}), we have 
\begin{align*}
&\mu_{\Omega,\rho,\phi,\alpha,q}^{(b)}(f)(x)
\le\sum_{k=0}^{\infty}\frac{1}{2^{(\rho+\alpha\log_2 c_0)k}}
\\
&\quad \quad\times\biggl(
\int_{0}^{\infty}\biggl|\frac{1}{t^\rho\phi(t)^\alpha}
\int_{B(t)\setminus B(t/2)}
\frac{b(2^k|y|)\Omega(y')}{|y|^{n-\rho}}f
\bigl(x-\phi(2^k|y|)y'\bigr)\,dy\biggr|^q\frac{dt}{t}\biggr)^{1/q}.
\end{align*}
Notice that
$b$ and $b(2^k\cdot)$
satisfy the same condition
due to the scaling invariance
of $\Delta_\gamma$.
Likewise
$\phi$ and $\phi(2^k\cdot)$
satisfy the same conditions
\eqref{eq:doubling} and \eqref{eq:uniform}
with constants independent of $k$.
Hence, for our purpose, it is sufficient to consider
the modified operator given by
\[
\tilde\mu_{\Omega,\rho,\phi,\alpha,q}^{(b)}(f)(x)\\
:=\biggl(\int_{0}^{\infty}\biggl|\frac{1}{t^\rho\phi(t)^\alpha}
\int_{B(t) \setminus B(t/2)}
\frac{b(|y|)\Omega(y')}{|y|^{n-\rho}}
f\bigl(x-\phi(|y|)y'\bigr)\,dy\biggr|^q
\frac{dt}{t}\biggr)^{1/q}
\]
for $x \in \Rn$.



Now we proceed to the proof of Theorem \ref{thm:fracMarcSurf-1-b}. 
Let
\begin{equation}\label{eq:fracMarSurf-q-2}
\tilde\mu_{\Omega,\rho,\phi,\alpha,q,j}^{(b)}f(x)
:=\biggl(\sum_{k=-\infty}^{\infty}
\int_{2^k}^{2^{k+1}}|\Psi_{j-k}*\sigma_t*f(x)|^q\frac{dt}
{t\phi(t)^{q\alpha}}\biggr)^{1/q}
\quad(x\in\Rn)
\end{equation}
for each $j$.
Using the partition of unity \eqref{eq:TL-unit-decomp0}
and the triangle inequality, we then have
\begin{align}\label{eq:fracMarSurf-q-1}
\tilde\mu_{\Omega,\rho,\phi,\alpha,q}^{(b)}f(x)
&=\biggl(\int_{0}^{\infty}\Bigl|\sum_{j\in\Z}\Psi_{j}*\sigma_t*f(x)\Bigr|^q
\frac{dt}{t\phi(t)^{q\alpha}}\biggr)^{1/q}
\notag\\
&=\biggl(\sum_{k\in\Z}\int_{2^k}^{2^{k+1}}
\Bigl|\sum_{j\in\Z}\Psi_{j-k}*\sigma_t*f(x)\Bigr|^q\frac{dt}
{t\phi(t)^{q\alpha}}\biggr)^{1/q}
\notag\\
&\le \sum_{j\in\Z}\biggl(\sum_{k\in\Z}\int_{2^k}^{2^{k+1}}
\Bigl|\Psi_{j-k}*\sigma_t*f(x)\Bigr|^q\frac{dt}{t\phi(t)^{q\alpha}}
\biggr)^{1/q}
\notag\\
&\le \sum_{j\in\Z}\tilde\mu_{\Omega,\rho,\phi,\alpha,q,j}^{(b)}f(x).
\end{align}

Next, we treat the $L^p$-estimate of 
$\tilde\mu_{\Omega,\rho,\phi,\alpha,q,j}^{(b)}f$.

Let us set
\[
\alpha(j)
:=
\begin{cases}
\alpha/c_1,&j \ge 0,\\
\alpha\log_2c_0,&j<0.
\end{cases}
\]
In Section \ref{section:4}
we plan to distinguish three cases to prove;
\begin{lemma}\label{lem:121108-1}
Assume either one of the following three conditions{\rm;}
\begin{enumerate}
\item
$1<q<r<\gamma q<\infty$.
\item
$1<q'<r'<\gamma q'<\infty$.
\item
$1<q=r<\infty$.
\end{enumerate}
If $\Omega \in L^1(\Sn)$, then we have
\begin{equation}\label{eq:mu-j-Lp-5}
\|\tilde\mu_{\Omega,\rho,\phi,\alpha,q,j}^{(b)}f\|_{L^r(\Rn)}
\le C
2^{-\alpha(j)j}\|\Omega\|_{L^1(\Sn)}\|f\|_{\dot F_{rq}^{\alpha}(\Rn)},
\end{equation}
\end{lemma}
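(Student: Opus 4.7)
I would start by expanding
\[
\tilde\mu^{(b)}_{\Omega,\rho,\phi,\alpha,q,j}f(x)^q = \sum_{k\in\Z}\int_{2^k}^{2^{k+1}}|\Psi_{j-k}*\sigma_t*f(x)|^q\,\frac{dt}{t\phi(t)^{q\alpha}}
\]
and aim for a pointwise bound of the form
\[
\tilde\mu_j(x)^q \le C\,2^{-q\alpha(j)j}\sum_{m\in\Z}a_m^{q\alpha}T\bigl(\Psi_m*f\bigr)(x)^q,
\]
where $T$ is an appropriate sublinear dominating operator. The target $L^r$-bound then follows by coupling a vector-valued inequality for $T$ with the Littlewood--Paley characterization \eqref{eq:TL-unit-decomp5}. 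The decay factor $2^{-q\alpha(j)j}$ arises purely from reindexing $m=j-k$: on $[2^k,2^{k+1}]$ we have $\phi(t)\sim 1/a_{-k}$, so the $t$-integral contributes a weight of order $a_{-k}^{q\alpha}$, and the two-sided lacunary bound $2^{1/c_1}\le a_{\ell+1}/a_\ell\le c_0$ (from \eqref{eq:uniform} and \eqref{eq:doubling}) yields the uniform comparison
\[
a_{m-j}^{q\alpha}\le C\,2^{-q\alpha(j)j}a_m^{q\alpha}\quad(m\in\Z),
\]
where the sign-sensitive choice of $\alpha(j)$ selects the favorable lacunary bound ($\alpha/c_1$ when $j\ge 0$, $\alpha\log_2 c_0$ when $j<0$).

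Case (3), $r=q$, is the easiest: Fubini exchanges the $L^q$-norm with the sum and $t$-integral, Young's inequality combined with $\|\sigma_t\|\le C\|\Omega\|_{L^1}$ (from \eqref{eq:121130-1}) yields $\|\Psi_{j-k}*\sigma_t*f\|_{L^q}\le C\|\Omega\|_{L^1}\|\Psi_{j-k}*f\|_{L^q}$, and reindexing followed by \eqref{eq:TL-unit-decomp5} closes this case.

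For cases (1) and (2), the key pointwise input is Jensen's inequality for the finite measure $|\sigma_t|$:
\[
|\Psi_{j-k}*\sigma_t*f(x)|^q\le\|\sigma_t\|^{q-1}\bigl(|\sigma_t|*|\Psi_{j-k}*f|^q\bigr)(x)\le C\|\Omega\|_{L^1}^{q-1}\sigma^*\bigl(|\Psi_{j-k}*f|^q\bigr)(x).
\]
After summing, integrating in $t$, and reindexing, this yields
\[
\tilde\mu_j(x)^q\le C\|\Omega\|_{L^1}^{q-1}\,2^{-q\alpha(j)j}\sum_m a_m^{q\alpha}\sigma^*\bigl(|\Psi_m*f|^q\bigr)(x).
\]
For case (1), $q<r<\gamma q$, I would take the $L^{r/q}$-norm, bound $\sigma^*$ by the weighted average of directional maximal operators $M_{y'}$ via \eqref{eq:sigma-max-HL}, then use Minkowski to move the $y'$-integration outside and invoke the uniform $L^{r/q}$-boundedness of $M_{y'}$; the constraint $r/q<\gamma$ is exactly the range in which the relevant exponent $r/(q\gamma')$ exceeds one. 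Case (2) is dispatched by duality: pair $\|\tilde\mu_j\|_{L^r}$ against $L^{r'}$, reinterpret the dual as a bound of the same shape as in case (1) with $q,r$ replaced by $q',r'$, and repeat the Jensen--maximal argument; the hypothesis $q'<r'<\gamma q'$ is precisely the dual counterpart of the hypothesis in case (1).

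The main obstacle will be case (1)/(2): rigorously justifying the vector-valued maximal inequality for $\sigma^*$ (uniformly in $y'\in\Sn$ and with the correct weighting $|\Omega|\in L^1(\Sn)$), and carefully tracking the parameters through the Minkowski and H\"older swaps so that the whole argument fits inside the range prescribed by $\gamma>\tfrac12\max(\tilde p,\tilde q)$. That calibration, together with verifying that no $j$-dependent loss sneaks into the intermediate constants, is the technical crux of the lemma.
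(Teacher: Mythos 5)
Your overall template (Jensen with respect to $|\sigma_t|$, the maximal operator $\sigma^*$, and the lacunary Littlewood--Paley characterization \eqref{eq:TL-unit-decomp5}) is the right one, and your Case (3) shortcut via Fubini and Young's inequality is a valid simplification of the paper's interpolation between Cases (1) and (2). The reindexing that produces the factor $2^{-q\alpha(j)j}$ is also handled correctly: from \eqref{eq:doubling} and \eqref{eq:121105-1} one gets $\phi(2^{-m})/\phi(2^{j-m})\le 2^{-j/c_1}$ for $j\ge 0$ and $\le 2^{-j\log_2 c_0}$ for $j<0$, which is precisely what $\alpha(j)$ encodes.

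The gap is in the calibration of Case (1) (and, by symmetry, Case (2)). After your pointwise reduction you must estimate
$\bigl\|\sum_m a_m^{q\alpha}\sigma^*(|\Psi_m*f|^q)\bigr\|_{L^{r/q}}$, and you propose to do this by applying \eqref{eq:sigma-max-HL}, Minkowski, and the uniform $L^{r/(q\gamma')}$-boundedness of $M_{y'}$, asserting that the hypothesis $r/q<\gamma$ guarantees $r/(q\gamma')>1$. That implication is false: $r/(q\gamma')>1$ is equivalent to $r>q\gamma'$, which is a constraint \emph{independent} of $r<q\gamma$. For instance, if $1<\gamma<2$ then $\gamma'>\gamma$ and the entire admissible range $q<r<\gamma q$ lies inside $r<q\gamma'$, so $r/(q\gamma')<1$ and the $L^{r/(q\gamma')}$-boundedness of $M_{y'}$ is unavailable; even for $\gamma>2$, every $r\in(q,q\gamma')$ causes the same failure. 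Since the theorem only demands $\gamma>\tfrac12\max\{\tilde p,\tilde q\}$, $\gamma$ is allowed to lie anywhere in $(1,\infty)$, so the failure is generic.

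The fix --- and what the paper actually does --- is to dualize \emph{before} applying Jensen. Writing $J=\|\tilde\mu^{(b)}_{\Omega,\rho,\phi,\alpha,q,j}f\|_{L^r}^q$, choose nonnegative $h\in L^s(\Rn)$ with $s=(r/q)'$ and $\|h\|_{L^s}=1$ representing $J$; then H\"older and Fubini push $\sigma^*$ onto $h$, not onto the blocks $|\Psi_m*f|^q$. The relevant maximal bound is then $\|\sigma^*(h)\|_{L^s}\lesssim\|b\|_{\Delta_\gamma}\|\Omega\|_{L^1}\|h\|_{L^s}$ for $s>\gamma'$, and the key point is that $s=(r/q)'>\gamma'$ is \emph{equivalent} to $r/q<\gamma$, i.e.\ exactly the hypothesis of Case (1). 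Your pointwise approach can be made to work, but only by rephrasing it as an $\ell^1$-vector-valued maximal inequality, which again requires this duality step (with $\sigma^*$ acting on the dual function in $L^{(r/q)'}$), not the $L^{r/(q\gamma')}$-boundedness you invoke. So the correct exponent condition is $(r/q)'>\gamma'$, not $r/(q\gamma')>1$, and the two are not the same; with the stated justification the proof does not close.
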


However, in Case 3,
we just interpolate Cases 1 and 2.
So we concentrate on Cases 1 and 2 in Section \ref{section:4}.

Note that Cases 1--3 do not cover all the cases
as the above images show.

We also need to prove;
\begin{lemma}\label{lem:121108-2}
Let $\phi$ satisfy the same conditions
\eqref{eq:doubling} and \eqref{eq:uniform}.
Assume that $\Omega \in L^1(\Sn)$ satisfies the cacellation condition 
$(\ref{eq:cancellation})$.
Then
\begin{equation}\label{eq:mu-j-7-9}
\|\tilde\mu_{\Omega,\rho,\phi,\alpha,2,j}^{(b)}f\|_{L^2(\Rn)}
\le C
2^{(\alpha(-j)/\alpha-\alpha(j))j}
\|\Omega\|_{L^1(\Sn)}
\|b\|_{\Delta_\gamma}
\|f\|_{\dot F_{2,2}^\alpha}.
\end{equation}
\end{lemma}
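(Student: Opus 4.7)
The plan is to exploit that $q=2$ and apply Plancherel's theorem, which reduces the $L^2$ bound to a pointwise estimate on $\widehat{\sigma_t}$. Since the $\psi_{j-k}$'s are essentially disjointly supported, I would first write
$$
\|\tilde\mu_{\Omega,\rho,\phi,\alpha,2,j}^{(b)}f\|_{L^2(\Rn)}^2
= \sum_{k \in \Z}\int_{2^k}^{2^{k+1}}\!\!\frac{dt}{t\phi(t)^{2\alpha}}
\int_{\Rn} |\psi_{j-k}(\xi)|^2 |\widehat{\sigma_t}(\xi)|^2 |\hat f(\xi)|^2\,d\xi,
$$
and then note that for $t \in [2^k, 2^{k+1}]$ the doubling condition \eqref{eq:doubling} gives $\phi(t) \approx \phi(2^k)$, while on $\supp \psi_{j-k}$ one has $|\xi| \approx a_{j-k} = 1/\phi(2^{k-j})$. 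In particular $\phi(t)|\xi| \approx \phi(2^k)/\phi(2^{k-j})$.

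The next step is to split according to the sign of $j$ and select the right bound from Lemma \ref{lem:121108-0}. For $j \ge 0$ one has $\phi(2^{k-j}) \le \phi(2^k)$, so $\phi(t)|\xi| \gtrsim 1$ and the cancellation estimate \eqref{eq:FTsigma-canc-1} is useless; I would use the trivial bound \eqref{eq:FTsigma-1}. For $j < 0$, \eqref{eq:121105-1} applied with $|j|$ yields $\phi(2^{k-j})/\phi(2^k) \ge a^{|j|}$, hence $\phi(t)|\xi| \lesssim a^j \ll 1$, and the cancellation bound \eqref{eq:FTsigma-canc-1} supplies an extra $a^j$ factor. The $\Delta_\gamma$-norm of $b$ (rather than $\Delta_1$) is brought in by H\"older's inequality when the integrand on an annulus is estimated against the $\Delta_\gamma$-normalization.

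After inserting the appropriate pointwise bound on $|\widehat{\sigma_t}|$, I would reindex by $m=j-k$ to convert the sum into
$$
\sum_{m \in \Z} \phi(2^{j-m})^{-2\alpha}\int_{\Rn}|\psi_m(\xi)|^2|\hat f(\xi)|^2\,d\xi,
$$
and compare it with the intrinsic weight $\phi(2^{-m})^{-2\alpha}=a_m^{2\alpha}$ that appears in the Triebel--Lizorkin characterization \eqref{eq:TL-unit-decomp5} with $p=q=2$. The ratio $\phi(2^{j-m})/\phi(2^{-m})$ is controlled, depending on the signs of $j$ and $\alpha$, by \eqref{eq:121105-1000} (giving an upper bound $c_0^{|j|}$) and by \eqref{eq:121105-1} (giving a lower bound $a^{|j|}$, with $a=2^{1/\|\varphi\|_\infty}$ and $1/\|\varphi\|_\infty \ge 1/c_1$ by \eqref{eq:uniform}). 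Tracking which branch to use in each of the four sign combinations is precisely what generates the two pieces of the exponent $\alpha(-j)/\alpha$ and $-\alpha(j)$, producing the claimed factor $2^{(\alpha(-j)/\alpha-\alpha(j))j}$. An application of \eqref{eq:TL-unit-decomp5} then identifies the remaining sum with $\|f\|_{\dot F^\alpha_{2,2}}^2$.

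The main obstacle I anticipate is the bookkeeping in the last step: one must argue uniformly across the two sign choices of $j$ and of $\alpha$, choosing between \eqref{eq:121105-1000} and \eqref{eq:121105-1} in each case, and verify that the resulting exponent collapses to exactly $(\alpha(-j)/\alpha-\alpha(j))j$ rather than a marginally weaker expression. A secondary technical point is that for $j<0$ the cancellation bound yields a clean $a^j$ factor only after the doubling and lacunarity inequalities are combined; one should check that this factor is absorbed by, and indeed sharpens, the weight comparison in the Triebel--Lizorkin matching.
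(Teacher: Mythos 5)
Your proof is correct, and the overall architecture (Plancherel, near-disjointness of the $\psi_{j-k}$ supports, reindexing $m = j-k$, and matching against the $a_m^{2\alpha}$ weight via Proposition~\ref{lem:Triebel-Liz1}) is the same as the paper's. The one place you deviate: for $j \ge 0$ you use the trivial bound \eqref{eq:FTsigma-1}, reasoning (correctly) that the cancellation estimate \eqref{eq:FTsigma-canc-1} is wasteful when $\phi(t)|\xi| \gtrsim 1$; the paper, by contrast, feeds \eqref{eq:mu-j-7-6} --- derived from \eqref{eq:FTsigma-canc-1} --- uniformly for all $j$, absorbing the resulting $(\phi(2^{j-\ell})/\phi(2^{-\ell}))^2$ factor into a $2^{2j\log_2 c_0}$ loss via the doubling condition. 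Your route yields the cleaner bound $2^{-\alpha(j)j}\,\|f\|_{\dot F^\alpha_{2,2}}$ for $j\ge0$, which is strictly stronger than the paper's $2^{(\alpha(-j)/\alpha-\alpha(j))j}$ there (the $\alpha(-j)/\alpha = \log_2 c_0 > 0$ piece disappears when you drop the $|\xi|^2\phi(t)^2$). So your claim that the exponent ``collapses to exactly'' $(\alpha(-j)/\alpha-\alpha(j))j$ is slightly inaccurate for $j\ge 0$: you would obtain a better exponent. Since the lemma is stated as an inequality, that is harmless, and the interpolation in Section~\ref{section:3} only needs the $j\ge0$ decay from Lemma~\ref{lem:121108-1}. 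For $j<0$ your computation reproduces the paper's exactly: the cancellation bound supplies the $(\phi(2^{j-\ell})/\phi(2^{-\ell}))^2 \le 2^{2j/c_1}$ factor via \eqref{eq:121105-1}, and the weight comparison via \eqref{eq:121105-1000} supplies the $2^{-2\alpha j\log_2 c_0}$ factor, giving $(1/c_1-\alpha\log_2 c_0)j = (\alpha(-j)/\alpha-\alpha(j))j$. One small caveat worth stating explicitly: the sign manipulations in the weight comparison (raising $\phi(2^{-\ell}) \le 2^{-j/c_1}\phi(2^{j-\ell})$ to the power $2\alpha$ without flipping) require $\alpha>0$, which is the regime in which this lemma is actually invoked.
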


By using the strong decay
of (\ref{eq:mu-j-Lp-5}),
interpolate (\ref{eq:mu-j-Lp-5}) and (\ref{eq:mu-j-7-9})
to have (\ref{eq:mu-j-Lp-5}) again
for any admissible $p$ and $q$.
Thus, in conclusion,
(\ref{eq:fracMarSurf-q-1}) is summable
over $j$ by virtue of (\ref{eq:mu-j-Lp-5}).

\section{Proof of Theorem \ref{thm:fracMarcSurf-1-b}} 
\label{section:3}

In this section, we prove Theorem \ref{thm:fracMarcSurf-1-b}. One can obtain
Theorem \ref{thm:fracMarcSurf-1-b} by observing carefully the proof of 
\cite[Theorem 6]{ChenFanYing}, but for the sake of completeness, we shall 
give its detailed proof, modifying their one.
\subsection{Proof of Lemma \ref{lem:121108-1}}

Here we do not need the cancellation property
of $\Omega$
and hence we can consider its absolute value
of $\sigma_t$.

\begin{enumerate}
\item
In case $q<r<\gamma q$, let
\begin{equation*}
J:=\|\tilde\mu_{\Omega,\rho,\phi,\alpha,q,j}^{(b)}f\|_{L^r(\Rn)}^q
=\biggl\|\biggl(\sum_{k\in\Z}\int_{2^k}^{2^{k+1}}
\Bigl|\Psi_{j-k}*\sigma_t*f\Bigr|^q\frac{dt}{t\phi(t)^{\alpha q}}
\biggr)^{1/q}\biggr\|_{L^r(\Rn)}^q.
\end{equation*}
Let us set $s=(r/q)'=r/(r-q)$. By the duality $L^{q/r}$-$L^s$, 
we can take a nonnegative function
$h\in L^s(\Rn)$ with $\|h\|_{L^s(\Rn)}=1$ such that
\begin{align*}
J&=\int_{\Rn}\sum_{k\in\Z}\left\{\int_{2^k}^{2^{k+1}}
\Bigl|\Psi_{j-k}*\sigma_t*f(x)\Bigr|^q\frac{dt}{t\phi(t)^{\alpha q}}
\right\}\,h(x)\,dx.
\end{align*}
Denote by $\|\sigma_t\|$ the total mass of $\sigma_t$.
Hence,
by the H\"{o}lder inequality 
\begin{align*}
J&=\sum_{k\in\Z}\int_{2^k}^{2^{k+1}}\left\{\int_{\Rn}
\Bigl|\int_{\Rn}\Psi_{j-k}*f(x-y)d\sigma_t(y)\Bigr|^q\,h(x)\,dx
\right\}
\frac{dt}{t\phi(t)^{\alpha q}}
\\
&\le \sum_{k\in\Z}\int_{2^k}^{2^{k+1}}\left\{\int_{\Rn}\left[
\int_{\Rn}|\Psi_{j-k}*f(x-y)|^qd|\sigma_t|(y)\right]\|\sigma_t\|^{q/q'}
\,h(x)\,dx\right\}\frac{dt}{t\phi(t)^{\alpha q}}.
\end{align*}
By virtue of (\ref{eq:121130-1}),
we have
\begin{align*}
J
&\le C\|\Omega\|_{L^1(S^{n-1})}^q
\|b\|_{\Delta_1}\\
&\quad \times\sum_{k\in\Z}
\int_{2^k}^{2^{k+1}}
\left\{\int_{\Rn}\left[\int_{\Rn}|\Psi_{j-k}*f(y)|^qd|\sigma_t|(x-y)\right]
\,h(x)\,dx\right\}\frac{dt}{t\phi(t)^{\alpha q}}
\\
&= C\|\Omega\|_{L^1(S^{n-1})}^q
\|b\|_{\Delta_1}\\
&\quad \times \sum_{k\in\Z}
\int_{2^k}^{2^{k+1}}\left\{\int_{\Rn}|\Psi_{j-k}*f(y)|^q
\biggl(\int_{\Rn}h(x)\,d|\sigma_t|(x-y)\biggr)dy\right\}
\frac{dt}{t\phi(t)^{\alpha q}}.
\end{align*} 
Since $1< q<r<\gamma q$, we have $s>\gamma'$.
So, by \eqref{eq:sigma-maxLp} and H\"older's inequality, 
we conclude
\begin{align*}
\lefteqn{
J^{1/q}
}\\
&\le C\|\Omega\|_{L^1(\Sn)}\|b\|_{\Delta_\gamma}
\biggl(\int_{\Rn}\left(\sum_{k\in\Z}
\int_{2^k}^{2^{k+1}}|\Psi_{j-k}*f(y)|^{q}
\sigma^*(h)(y)\frac{dt}{t\phi(t)^{\alpha q}}
\right)dy\biggr)^{1/q}
\\
&\le C\|\Omega\|_{L^1(\Sn)}
\|b\|_{\Delta_\gamma}\biggl(\int_{\Rn}\sum_{k\in\Z}
\frac1{\phi(2^k)^{\alpha q}}|\Psi_{j-k}*f(y)|^{q}\sigma^*(h)(y)dy\biggr)^{1/q}
\\
&\le C\|\Omega\|_{L^1(\Sn)}
\|b\|_{\Delta_\gamma}
\biggl(\int_{\Rn}\biggl(\sum_{k\in\Z}
\frac1{\phi(2^k)^{\alpha q}}
|\Psi_{j-k}*f(y)|^{q}\biggr)^{s'}dy\biggr)^{1/(s'q)}\|h\|_{L^s(\Rn)}^{1/q}
\\
&= C\|\Omega\|_{L^1(\Sn)}\|b\|_{\Delta_\gamma}
\biggl(\int_{\Rn}\biggl(
\sum_{\ell\in\Z}\frac1{\phi(2^{j-\ell})^{\alpha q}}|\Psi_{\ell}*f(y)|^{q}
\biggr)^{r/q}dy\biggr)^{1/r}.
\end{align*}
Thus, we have
\begin{equation}\label{eq:mu-j-Lp-3}
\|\tilde\mu_{\Omega,\rho,\phi,\alpha,q,j}^{(b)}f\|_{L^r(\Rn)}\le 
 C2^{-\alpha(j)j}\|b\|_{\Delta_\gamma}\|f\|_{\dot F_{rq}^{\alpha}}.
\end{equation}
\item
In case $1<r<q$ and $r'<\gamma q'$, 
it follows that $r'>q'$. By duality, there is a sequence of
functions $g_k(x,t)$ such that
\begin{equation*}
\biggl(\int_\Rn\biggl(\sum_{k\in\Z}\int_{2^k}^{2^{k+1}}|g_k(x,t)|^{q'}
\frac{dt}{t}\biggr)^{r'/q'}dx\biggr)^{1/r'}=1,
\end{equation*}
and that
\begin{align*}
&\biggl\|\biggl(\sum_{k\in\Z}\int_{2^k}^{2^{k+1}}
\Bigl|\Psi_{j-k}*\sigma_t*f\Bigr|^q\frac{dt}{t\phi(t)^{\alpha q}}
\biggr)^{1/q}\biggr\|_{L^r(\Rn)}
\\
&=\int_\Rn \sum_{k\in\Z}\left\{\int_{2^k}^{2^{k+1}}
\bigl(\Psi_{j-k}*\sigma_t*f(x)\bigr)g_k(x,t)\frac{dt}{t\phi(t)^{\alpha}}
\right\}dx.
\end{align*}
Then we have 
\begin{align*}
&\int_\Rn \sum_{k\in\Z}\int_{2^k}^{2^{k+1}}
\bigl(\Psi_{j-k}*\sigma_t*f(x)\bigr)g_k(x,t)
\frac{dt}{t\phi(t)^{\alpha}}dx
\\
&\le \int_\Rn \sum_{k\in\Z}\left\{\int_{2^k}^{2^{k+1}}\biggl(\int_\Rn
|\Psi_{j-k}*f(y)|d|\sigma_t|(x-y)\biggr)|g_k(x,t)|
\frac{dt}{t\phi(t)^{\alpha}}\right\}dx
\\
&\le \int_\Rn \sum_{k\in\Z}\left\{\int_{2^k}^{2^{k+1}}|\Psi_{j-k}*f(y)|
\biggl(\int_\Rn|g_k(x,t)|d|\sigma_t|(x-y)\biggr)\frac{dt}{t\phi(t)^{\alpha}}
\right\}dx
\\
&\le C\int_\Rn \sum_{k\in\Z}
\left\{\int_{2^k}^{2^{k+1}}\frac1{\phi(2^k)^{\alpha}}
|\Psi_{j-k}*f(y)|\biggl(\int_\Rn|g_k(x,t)|d|\sigma_t|(x-y)\biggr)\frac{dt}{t}
\right\}dy.
\end{align*}
By using the H\"{o}lder inequality for sequences,
we have
\begin{align*}
&\int_\Rn \left(\sum_{k\in\Z}\int_{2^k}^{2^{k+1}}
\bigl(\Psi_{j-k}*\sigma_t*f(x)\bigr)g_k(x,t)
\frac{dt}{t\phi(t)^{\alpha}}\right)dx
\\
&\le C\int_\Rn \Bigl(\sum_{k\in\Z}\frac1{\phi(2^k)^{\alpha q}}
|\Psi_{j-k}*f(y)|^q\Bigr)^{1/q}
\\
&\quad\times\biggl(\sum_{k\in\Z}
\int_{2^k}^{2^{k+1}}\biggl(\int_\Rn|g_k(x,t)|d|\sigma_t|(x-y)\biggr)^{q'}
\frac{dt}{t}\biggr)^{1/q'}dy.
\end{align*}
By the properties of $\phi$ and
Proposition \ref{lem:Triebel-Liz1},
we conclude
\begin{align*}
&\int_\Rn \left(\sum_{k\in\Z}\int_{2^k}^{2^{k+1}}
\bigl(\Psi_{j-k}*\sigma_t*f(x)\bigr)g_k(x,t)
\frac{dt}{t\phi(t)^{\alpha}}\right)dx
\\
&\le C2^{-\alpha(j) j}\|b\|_{\Delta_\gamma}
\biggl(\int_\Rn \Bigl(\sum_{k\in\Z}
\frac1{\phi(2^k)^{\alpha q}}
|\Psi_{k}*f(y)|^q\Bigr)^{r/q}dy\biggr)^{1/r}
\\
&\quad\times\biggl(\int_\Rn \biggl(\sum_{k\in\Z}
\int_{2^k}^{2^{k+1}}\biggl(\int_\Rn|g_k(x,t)|d|\sigma_t|(x-y)\biggr)^{q'}
\frac{dt}{t}\biggr)^{r'/q'}dy\biggr)^{1/r'}
\\
&= C2^{-\alpha(j) j}\|b\|_{\Delta_\gamma}\|f\|_{\dot F_{rq}^{\alpha}}
\\
&\quad\times\biggl(\int_\Rn \biggl(\sum_{k\in\Z}
\int_{2^k}^{2^{k+1}}\biggl(\int_\Rn|g_k(x,t)|d|\sigma_t|(x-y)\biggr)^{q'}
\frac{dt}{t}\biggr)^{r'/q'}dy\biggr)^{1/r'}.
\end{align*}

In the same way as in \cite[p.~705]{ChenFanYing}, using
\eqref{eq:sigma-maxLp}, we can check
\begin{align*}
\biggl(\int_\Rn \biggl(\sum_{k\in\Z}
&\int_{2^k}^{2^{k+1}}\biggl(\int_\Rn|g_k(x,t)|d|\sigma_t|(x-y)
\biggr)^{q'}\frac{dt}{t}\biggr)^{r'/q'}dy\biggr)^{1/r'}
\\
&\le C\|\Omega\|_{L^1(\Sn)}
\biggl(\int_\Rn\biggl(\sum_{k\in\Z}
\int_{2^k}^{2^{k+1}}|g_k(x,t)|^{q'}\frac{dt}{t}\biggr)^{r'/q'}dx\biggr)^{1/r'},
\end{align*}
if $(\frac{r'}{q'})'>\gamma'$.
Hence we have, for $1<q'<r'<\gamma q'$,
\begin{equation}\label{eq:mu-j-Lp-4}
\|\tilde\mu_{\Omega,\rho,\phi,\alpha,q,j}^{(b)}f\|_{L^r(\Rn)}
\le C2^{-\alpha(j)j}\|f\|_{\dot F_{rq}^{\alpha}}.
\end{equation}
So, we are done.
\end{enumerate}
\par
\subsection{Proof of Lemma \ref{lem:121108-2}}

By virtue of Plancherel's theorem
and the Fubini theorem, we have
\begin{equation}\label{eq:mu-j-7-7}
\begin{aligned}
\|\tilde\mu_{\Omega,\rho,\phi,\alpha,2,j}^{(b)}f\|_{L^2}^2
&=\sum_{k=-\infty}^{\infty}\int_{\Rn}\left(
\int_{2^k}^{2^{k+1}}|\Psi_{j-k}*\sigma_t*f(x)|^2
\frac{dt}{t\phi(t)^{2\alpha}}\right)\,dx
\\
&\le C
\sum_{k=-\infty}^{\infty}\int_{\Rn}\left(
\int_{2^k}^{2^{k+1}}|\widehat{\sigma_t}(\xi)|^2
\frac{dt}{t\phi(t)^{2\alpha}}\right)|\hat f(\xi)|^2\psi_{j-k}(\xi)^2\,d\xi.
\end{aligned}
\end{equation}

By \eqref{eq:mu-j-7-6}, \eqref{eq:mu-j-7-7} and the
support property of $\psi_{j-k}$, we have
\begin{align*}
\|\tilde\mu_{\Omega,\rho,\phi,\alpha,2,j}^{(b)}f\|_{L^2}^2
&\le C
\|b\|_{\Delta_1}
\sum_{k=-\infty}^{\infty}\int_{\Rn}
\frac{|\xi|^{2}}{\phi(2^k)^{2\alpha-2}}
|\hat f(\xi)|^2\psi_{j-k}(\xi)^2\,d\xi
\\
&\le C
\|b\|_{\Delta_1}
\sum_{\ell=-\infty}^{\infty}\int_{\Rn}
\frac{1}{\phi(2^{-\ell})^2\phi(2^{j-\ell})^{2\alpha-2}}
|\hat f(\xi)|^2\psi_{\ell}(\xi)^2\,d\xi.
\end{align*}
For $j\le 0$, from \eqref{eq:121105-1}, 
it follows 
$
\phi(2^{j-\ell})\le C2^{j/c_1}\phi(2^{-\ell}),
$ 
and for 
$j\ge 0$, from \eqref{eq:121105-1000} we get 
$
\phi(2^{j-\ell})\le 2^{j\log_2c_0}\phi(2^{-\ell}).$
Likewise, we have 
$
\phi(2^{-\ell})
\le 
2^{-\alpha(j)j}\phi(2^{j-\ell}).
$

We need to control the integrand;
first of all,
$$
\frac{1}{\phi(2^{j-l})^{2\alpha-2}}
=
\frac{\phi(2^{j-l})^{2}}{\phi(2^{j-l})^{2\alpha}}
$$
When
$j\ge0$,
we use
\begin{equation*}
\phi(2^{j-\ell})^2\le C2^{2j\log_2 c_0}\phi(2^{-\ell})^2
\end{equation*}
and
\begin{equation*}
\Bigl(\frac{1}{\phi(2^{j-\ell})}\Bigr)^{2\alpha}
\le C\Bigl(\frac{2^{-\alpha(j)j}}{\phi(2^{-\ell})}\Bigr)^{2\alpha}
\le C\frac{2^{-2(\alpha/c_1)j}}{\phi(2^{-\ell})^{2\alpha}}.
\end{equation*}
When $j\le0$,
we use
\begin{equation*}
\phi(2^{j-\ell})^2\le C2^{2j/c_1}\phi(2^{-\ell})^2
\end{equation*}
and
\begin{equation*}
\Bigl(\frac{1}{\phi(2^{j-\ell})}\Bigr)^{2\alpha}
\le C\Bigl(\frac{2^{-\alpha(j)j}}{\phi(2^{-\ell})}\Bigr)^{2\alpha}
\le C\frac{2^{-2\alpha(\log_2c_0)j}}{\phi(2^{-\ell})^{2\alpha}}.
\end{equation*}
So, if $j\ge0$, we have 
\begin{align*}
\|\tilde\mu_{\Omega,\rho,\phi,\alpha,2,j}^{(b)}f\|_{L^2}^2
&\le C
2^{2(\log_2c_0-\alpha/c_1)j}
\sum_{\ell=-\infty}^{\infty}\int_{\Rn}\frac{1}{\phi(2^{-\ell})^{2\alpha}}
|\hat f(\xi)|^2\psi_{\ell}(\xi)^2\,d\xi
\\
&\le C
2^{2(\log_2c_0-\alpha/c_1)j}
\|f\|_{\dot F_{2,2}^\alpha}^2.
\end{align*}
Hence, after incorporating a similar estimate for $j \le 0$, we get
(\ref{eq:mu-j-7-9}).


\subsection{Interpolation and the conclusion of the proof of (i)}
Let
$$
\alpha\in\left(0,
\frac{1}{c_1\log_2c_0}
\cdot
\frac{1/{\tilde p}-1/(2\gamma)}{1/2-1/(2\gamma)}
\cdot
\frac{1/{\tilde q}-1/(2\gamma)}{1/2-1/(2\gamma)}
\right).
$$
By interpolating \eqref{eq:mu-j-7-9} and \eqref{eq:mu-j-Lp-5},
we claim that 
there exists $\delta>0$ such that
\begin{equation}\label{eq:mu-j-Lp-6}
\|\tilde\mu_{\Omega,\rho,\phi,\alpha,q,j}^{(b)}f\|_{L^p(\Rn)}
\le C2^{-\delta|j|}\|f\|_{\dot F_{pq}^{\alpha}}.
\end{equation}

When $p=q=2$, then (\ref{eq:mu-j-Lp-6})
is correct by virtue of (\ref{eq:mu-j-Lp-5}) $(j\ge0)$ 
and (\ref{eq:mu-j-7-9}) $(j<0)$. 
We check next the case  $p\ne2$ and $q\ne2$.
For $j\ge0$, by \eqref{eq:mu-j-Lp-5} we may take $\delta=\alpha(j)=\alpha/c_1$.
For $j\le-1$, we take $1<r_1,r_2<\infty$ and $0<\theta_1,\theta_2<1$
satisfying
\begin{align}
\frac{1}{p}&=\frac{\theta_1}{2}+\frac{1-\theta_1}{r_1},
\label{eq:mu-j-Lp-7-1}
\\
\frac{1}{q}&=\frac{\theta_2}{2}+\frac{1-\theta_2}{r_2}.
\label{eq:mu-j-Lp-7-2}
\end{align}
Note that we have
\[
(p-2)(r_1-2)>0,
(q-2)(r_2-2)>0.
\]
We choose $1<r_1,r_2<\infty$ so that
$$
\tilde p<\tilde{r_1}<2\gamma,
\tilde q<\tilde{r_2}<2\gamma
$$
and then
determine $\theta_1,\,\theta_2$ by the equations \eqref{eq:mu-j-Lp-7-1},
\eqref{eq:mu-j-Lp-7-2}. 
As in the graph,
we can arrange that
\begin{equation}\label{eq:mu-j-Lp-7-3}
\alpha<\frac{\theta_1\theta_2}{c_1\log_2c_0}<\frac1{c_1\log_2c_0}\,
\cdot
\frac{1/{\tilde p}-1/(2\gamma)}{1/2-1/(2\gamma)}\,
\cdot
\frac{1/{\tilde q}-1/(2\gamma)}{1/2-1/(2\gamma)}.
\end{equation}
We shall see that this choice is possible. 
Recall that
$\tilde{p},\tilde{q}<2\gamma$.
Then 
an arithmetic shows
\begin{align*}
\theta_1&=\frac{1/p-1/r_1}{1/2-1/r_1}=\frac{1/p'-1/r_1'}{1/2-1/r_1'}
=\frac{1/\tilde p-1/\tilde{r_1}}{1/2-1/\tilde{r_1}},
\end{align*}
and that
\begin{align*}
\theta_2&=\frac{1/q-1/r_2}{1/2-1/r_2}=\frac{1/q'-1/r_2'}{1/2-1/r_2'}
=\frac{1/\tilde q-1/\tilde{r_2}}{1/2-1/\tilde{r_2}}.
\end{align*}
Assuming that $\tilde{p},\tilde{q}>2$,
we conclude that the parameters $\theta_1$ and $\theta_2$ 
are increasing on $(2,\infty)$ with respect to $\tilde{r_1}$ and
$\tilde{r_2}$ as functions in $\tilde{r_1}$ and $\tilde{r_2}$, respectively.
Hence
\begin{equation*}
\theta_1\theta_2=\frac{1/\tilde p-1/\tilde{r_1}}{1/2-1/\tilde{r_1}}
\cdot
\frac{1/\tilde q-1/\tilde{r_2}}{1/2-1/\tilde{r_2}}<
\frac{1/{\tilde p}-1/(2\gamma)}{1/2-1/(2\gamma)}
\cdot
\frac{1/{\tilde q}-1/(2\gamma)}{1/2-1/(2\gamma)}.
\end{equation*}
Therefore, since
$$
0<\alpha<
\frac{1}{c_1\log_2c_0}
\cdot
\frac{1/{\tilde p}-1/(2\gamma)}{1/2-1/(2\gamma)}
\cdot
\frac{1/{\tilde q}-1/(2\gamma)}{1/2-1/(2\gamma)},
$$ and 
${\tilde p}, {\tilde q}<2\gamma$, we get \eqref{eq:mu-j-Lp-7-3} by choosing 
$r_1$ sufficiently near $2\gamma$ if $p>2$ and $r_1'$ sufficiently near 
$2\gamma$ if $1<p<2$, and by choosing $r_2$ similarly according to $q>2$ or 
$1<q<2$.

Now, interpolating \eqref{eq:mu-j-7-9} and \eqref{eq:mu-j-Lp-5} with
$r=r_1, q=2$, we get
\begin{equation}\label{eq:mu-j-Lp-5-1}
\|\tilde\mu_{\Omega,\rho,\phi,\alpha,2,j}^{(b)}f\|_{L^{p}(\Rn)}\le 
C2^{\bigl(\theta_1(1/c_1-\alpha\log_2c_0)-(1-\theta_1)\alpha\log_2c_0\bigr)j}
\|f\|_{\dot F_{p2}^{\alpha}}.
\end{equation}
We then interpolate \eqref{eq:mu-j-Lp-5} and \eqref{eq:mu-j-Lp-5-1}
with $r=p, q=r_2$. As a consequence, we have
\begin{equation*}
\|\tilde\mu_{\Omega,\rho,\phi,\alpha,q,j}^{(b)}f\|_{L^{p}(\Rn)}
\le C2^{\{\theta_2\bigl(
\theta_1(1/c_1-\alpha\log_2c_0)-(1-\theta_1)\alpha\log_2c_0\bigr)
-(1-\theta_2)\alpha\log_2c_0\}j}\|f\|_{\dot F_{pq}^{\alpha}}.
\end{equation*}
An arithmetic together with (\ref{eq:mu-j-Lp-7-3}) shows that
\begin{equation*}
\theta_2
\bigl(\theta_1(1/c_1-\alpha\log_2c_0)-(1-\theta_1)\alpha\log_2c_0\bigr)
-(1-\theta_2)\alpha\log_2c_0=\theta_1\theta_2/c_1-\alpha\log_2c_0>0.
\end{equation*}
Thus, 
taking $\delta=\min\{\alpha/c_1,\,\theta_1\theta_2/c_1-\alpha\log_2c_0\}$,
we obtain the desired estimate \eqref{eq:mu-j-Lp-6}.

In the case $p=2$ or $q=2$, we can get the desired estimate more simply, 
by applying interpolation once. \par
Thus by \eqref{eq:fracMarSurf-q-1} and
\eqref{eq:mu-j-Lp-6} we obtain
\begin{equation}\label{eq:121106-11}
\|\tilde\mu_{\Omega,\rho,\phi,\alpha,q}^{(b)}f\|_{L^p(\Rn)}
\le \sum_{j\in\Z}\|\tilde\mu_{\Omega,\rho,\phi,\alpha,q,j}^{(b)}f\|_{L^p(\Rn)}
\le C\|f\|_{\dot F_{pq}^{\alpha}}.
\end{equation}
This completes the proof of Theorem \ref{thm:fracMarcSurf-1-b}(i).
\par\smallskip
\subsection{The proof of (ii)}
Below we shall prove Theorem \ref{thm:fracMarcSurf-1-b}(ii). 
By the Schwarz inequality,
we have
\begin{equation*}
\begin{aligned}
|\widehat {\sigma_t}(\xi)|&=\frac{1}{t^{\rho}}\biggl|
\int_{t/2<|y|\le t}e^{-i\phi(|y|)y'\cdot \xi}
\frac{b(|y|)\Omega(y')}{|y|^{n-\rho}}\,dy\biggr|
\\
&=\frac{1}{t^{\rho}}\biggl|\int_{t/2}^{t}\left(
\int_{\Sn}\Omega(y')e^{-i\phi(r)y'\cdot \xi}\,d\sigma(y')
\right)b(r)r^{\rho-1}
dr\biggr|
\\
&\le\biggl(\int_{t/2}^{t}|b(r)|^2\frac{dr}{r}\biggr)^{1/2}
\biggl(\int_{t/2}^{t}\Bigl|\int_{\Sn}
\Omega(y')e^{-i\phi(r)y'\cdot \xi}\,d\sigma(y')\Bigr|^2\frac{dr}r\biggr)^{1/2}
\\
&\le C\|b\|_{\Delta_2}\biggl(\int_{t/2}^{t}\Bigl|\int_{\Sn}
\Omega(y')e^{-i\phi(r)y'\cdot \xi}\,d\sigma(y')\Bigr|^2\frac{dr}r\biggr)^{1/2}.
\end{aligned}
\end{equation*}
Recall 
\begin{equation}\label{eq:mu-j-7-3-1-bb}
W_\Omega=\sup_{\xi \in \Rn \setminus \{0\}}\biggl(
\int_{\Sn\times\Sn}\frac{|\Omega(y'){\Omega(z')}|}
{|(y'-z')\cdot \xi'|^{\beta}}\,d\sigma(y')d\sigma(z')\biggr)^{1/2}.
\end{equation}
Then, by \eqref{eq:mu-j-7-3-1-b} and the doubling condition of $\phi$, we have
\begin{equation}\label{eq:mu-j-7-6-1}
\biggl(\int_{2^k}^{2^{k+1}}|\widehat{\sigma_t}(\xi)|^2
\frac{dt}{t\phi(t)^{2\alpha}}\biggr)^{1/2}
\le\frac{CW_\Omega\|b\|_{\Delta_2}}{|\xi|^{\beta/2}\phi(2^{k})^{\beta/2}\phi(2^{k})^\alpha}.
\end{equation}
By \eqref{eq:mu-j-7-7}, \eqref{eq:mu-j-7-6-1} and the support property of 
$\psi_{j-k}$, we have
\begin{align*}
\|\tilde\mu_{\Omega,\rho,\phi,\alpha,2,j}^{(b)}f\|_{L^2(\Rn)}^2
&\le CW_\Omega^2\|b\|_{\Delta_2}^2\sum_{k=-\infty}^{\infty}
\int_{\Rn}\frac{|\hat f(\xi)|^2\psi_{j-k}(|\xi|)^2}
{|\xi|^{\beta}\phi(2^{k-1})^{\beta}\phi(2^{k+1})^{2\alpha}}\,d\xi
\\
&\le CW_\Omega^2\|b\|_{\Delta_2}^2
\sum_{\ell=-\infty}^{\infty}
\int_{\Rn}\Bigl(\frac{\phi(2^{-\ell})}
{\phi(2^{j-\ell})}\Bigr)^{\beta}
\phi(2^{j-\ell})^{-2\alpha}
|\hat f(\xi)|^2\psi_{\ell}(|\xi|)^2\,d\xi.
\end{align*}
As in the case (i), we have 
$$
\phi(2^{-\ell})\le 2^{-j/c_1}\phi(2^{j-\ell})
$$ 
for $j\ge0$, and 
$$
\phi(2^{-\ell})\le 2^{-j\log_2c_0}\phi(2^{j-\ell})
$$ 
for $j\le0$.   
Similarly, we have 
$$
\phi(2^{j-\ell})\le 2^{j/c_1}\phi(2^{-\ell})
$$ 
for $j\le0$ 
and 
$$
\phi(2^{j-\ell})\le c_0^{j-1}=2^{j\log_2c_0}\phi(2^{-\ell})
$$ 
for $j\ge0$. So, as in the $L^2$-estimate in (i), we obtain
\begin{equation}\label{eq:mu-j-7-9-}
\|\tilde\mu_{\Omega,\rho,\phi,\alpha,2,j}^{(b)}f\|_{L^2(\Rn)}
\le 
\begin{cases}
C2^{-(\beta/(2c_1)+\alpha\log_2c_0)j}W_\Omega\|b\|_{\Delta_2}
\|f\|_{\dot F_{2,2}^\alpha}
&\text{ if } j\ge1,
\\
C2^{-((\beta\log_2c_0)/2+\alpha/c_1)j}W_\Omega\|b\|_{\Delta_2}
\|f\|_{\dot F_{2,2}^\alpha}&\text{ if } j\le0.
\end{cases}
\end{equation}
As for the $L^p$-estimate, since $\alpha<0$, we use 
$\phi(2^{j-\ell})\le c_0^j\phi(2^{-\ell})$ for $j\ge0$ and 
$\phi(2^{j-\ell})\le 2^{j/c_1}\phi(2^{-\ell})$ for $j\le0$.
Hence 
we get, as in the $L^p$-estimate in (i), for any $1<q,r<\infty$ with 
$\tilde r<\gamma\tilde q$ and $j\in\Z$
\begin{equation}\label{eq:mu-j-Lp-5-}
\|\tilde\mu_{\Omega,\rho,\phi,\alpha,q,j}^{(b)}f\|_{L^r(\Rn)}
\le \begin{cases}
C2^{-(\alpha/c_1)j}W_\Omega\|b\|_{\Delta_2}\|f\|_{\dot F_{rq}^{\alpha}}
\text{ for }j\le0,
\\
C2^{-(\alpha \log_2c_0)j}W_\Omega\|b\|_{\Delta_2}\|f\|_{\dot F_{rq}^{\alpha}}
\text{ for }j\ge0.
\end{cases}
\end{equation}

It follows that, for
$$\alpha\in
\left(-\frac{2\beta}{c_1\log_2c_0}
\cdot
\frac{1/{\tilde p}-1/(2\gamma)}{1-1/\gamma}
\cdot
\frac{1/{\tilde q}-1/(2\gamma)}{1-1/\gamma},0\right), 
$$
there still exists $\delta>0$ such that
\begin{equation}\label{eq:mu-j-Lp-6-1}
\|\tilde\mu_{\Omega,\rho,\phi,\alpha,q,j}^{(b)}f\|_{L^p(\Rn)}
\le C2^{-\delta|j|}\|f\|_{\dot F_{pq}^{\alpha}},
\end{equation}
by using \eqref{eq:mu-j-Lp-5-} in the case $j\le0$, and
interpolating \eqref{eq:mu-j-7-9-} and
\eqref{eq:mu-j-Lp-5-}
in the case $j>0$, as in the case (i).

Thus by \eqref{eq:fracMarSurf-q-1} and
\eqref{eq:mu-j-Lp-6-1} we obtain
\begin{equation*}
\|\tilde\mu_{\Omega,\rho,\phi,\alpha,q}^{(b)}f\|_{L^p(\Rn)}
\le 
\sum_{j\in\Z}\|\tilde\mu_{\Omega,\rho,\phi,\alpha,q,j}^{(b)}f\|_{L^p(\Rn)}
\le C
W_\Omega\|b\|_{\Delta_2}\|f\|_{\dot F_{pq}^{\alpha}}.
\end{equation*}
This completes the proof of Theorem \ref{thm:fracMarcSurf-1-b}(ii). 
\par\smallskip
\subsection{Proof of (iii)}
We proceed to show (iii). Let $\Omega\in L\log L(\Sn)$. 
We normalize $\Omega$ to have
$\|\Omega\|_{L\log L(\Sn)}=1$.
Then, as in \cite[p.698-p.699]{AACPan}, 
there is a subset $\Lambda\subset \N \cup \{0\}$
and 
a sequence of functions $\{\Omega_m; m\in\Lambda\}$
 satisfying
$0 \in \Lambda$ and the following conditions;
\begin{equation}\label{eq:fracMar-8-2}
\int_{\Sn}\Omega_m(y')\,d\sigma(y')=0;
\end{equation}
\begin{equation}\label{eq:fracMar-8-4}
\Omega(x')=\sum_{m\in\Lambda}\Omega_m(x');
\end{equation}
\begin{equation}\label{eq:fracMar-8-5}
\|\Omega_0\|_{L^2(\Sn)}
+
\sum_{m\in\Lambda}m
\|\Omega_m\|_{L^1(\Sn)}\le C
\|\Omega\|_{L\log L(\Sn)}.
\end{equation}
Indeed, we just let
\[
\Lambda=\{m \in \N\,:\,\sigma\{2^{m-1}<|\Omega| \le 2^m\}>2^{-4m}\}
\]
and define
\[
\Omega_m(x)=\Omega(x)\chi_{\{2^{m-1}<|\Omega| \le 2^m\}}(x)-
\frac{1}{\sigma(\Sn)}
\int_{2^{m-1}<|\Omega(y)| \le 2^m}\Omega(y)\,d\sigma(y),
\]
\[
\Omega_0(x)=\Omega(x)-\sum_{m \in \Lambda \cap \N}\Omega_m(x).
\]

Now for $m\in\Lambda$,
by observing the proof of the case (i), 
we choose $\theta_1$ and $\theta_2$ very close 
to $$
4\frac{1/\tilde{p}-1/(2\gamma)}{1-1/\gamma}
\cdot
\frac{1/\tilde{q}-1/(2\gamma)}{1-1/\gamma}
$$ 
so that $\delta=\alpha/c_1$ for small $\alpha>0$. 
For large $m$, setting $\alpha=1/m$, we obtain
\begin{equation}\label{eq:fracMar-8-6}
\|\mu_{\Omega_m,\rho,\phi,\alpha,q,j}^{(b)}f\|_{L^p(\Rn)}
\le C2^{-|j|/m}\|\Omega_m\|_{L^1(\Sn)}\|f\|_{\dot F_{pq}^{1/m}},\ j\in\Z.
\end{equation}
Next, from  $\Omega_m\in L^2(\Sn)$ it follows that $\Omega_m$ satisfies the
condition in Theorem \ref{thm:fracMarcSurf-1-b}(ii) for any $\beta<1/2$. Fix
$0<\beta<{1/2}$, and $\alpha_0>0$ with 
$$
\alpha_0<
\left(0,\min\left\{
\frac{2\beta}{c_1\log_2c_0}
\cdot
\frac{1/\tilde{p}-1/(2\gamma)}{1-1/\gamma}
\cdot
\frac{1/\tilde{q}-1/(2\gamma)}{1-1/\gamma},
\frac{\rho}{\log_2c_0}
\right\}
\right).
$$ 
Let also
$$
\delta_0
=\min\left\{\frac{\alpha_0}{c_1},
\frac{\beta\theta_1\theta_2}{2c_1}+\alpha_0\log_2c_0
\right\}
$$
in the proof of the case (ii). Then we obtain
\begin{equation}\label{eq:fracMar-8-7}
\|\mu_{\Omega_m,\rho,\phi,-\alpha_0,q,j}^{(b)}f\|_{L^p(\Rn)}
\le C2^{-\delta_0|j|}\|\Omega_m\|_{L^2(\Sn)}\|f\|_{\dot F_{pq}^{-\alpha_0}},
\ j\in\Z.
\end{equation}
Since 
$\frac{\alpha_0}{1/m+\alpha_0}+\bigl(1-\frac{\alpha_0}{1/m+\alpha_0}\bigr)=1$
and 
$\frac1m\cdot \frac{\alpha_0}{1/m+\alpha_0}-\alpha_0
\bigl(1-\frac{\alpha_0}{1/m+\alpha_0}
\bigr)=0,
$
an interpolation between \eqref{eq:fracMar-8-6} and \eqref{eq:fracMar-8-7}
yields
\begin{align*}
&\|\mu_{\Omega_m,\rho,\phi,0,q,j}^{(b)}f\|_{L^p(\Rn)}
\\
&\le C2^{-(\alpha_0/(1+m\alpha_0)+\delta_0/(1+m\alpha_0))|j|}
\|\Omega_m\|_{L^1(\Sn)}^{\alpha_0/(1/m+\alpha_0)}
\|\Omega_m\|_{L^2(\Sn)}^{1/(1+m\alpha_0)}\|f\|_{\dot F_{pq}^{0}}
\\
&\le C2^{-|j|/m}2^{4/\alpha_0}\|f\|_{\dot F_{pq}^{0}},
\ j\in\Z.
\end{align*}
Thus, summing the above estimate up, we obtain
\begin{equation}\label{eq:fracMar-8-9}
\|\mu_{\Omega_m,\rho,\phi,0,q}^{(b)}f\|_{L^p(\Rn)}
\le \frac{C}{1-2^{-1/m}}\|f\|_{\dot F_{pq}^{0}}\le C\,m\|f\|_{\dot F_{pq}^{0}}.
\end{equation}
Combining \eqref{eq:fracMar-8-9} with \eqref{eq:fracMar-8-4} and
\eqref{eq:fracMar-8-5} and the definition of $\mu_{\Omega,0,\rho,q}^{(b)}$,
we obtain the desired estimate
\begin{equation*}
\|\mu_{\Omega,\rho,\phi,0,q}^{(b)}f\|_{L^p(\Rn)}
\le C\|\Omega\|_{L\log L(\Sn)}\|f\|_{\dot F_{pq}^{0}}.
\end{equation*}
Thus, we are done.

\qed
\section{Proof of Theorem \ref{thm:fracMarcSurf-1} }
\label{section:4}

Here we shall relax the condition on $\alpha$
by taking advantage of a new condition
on $\phi$.
We use the notations in the proof of Theorem \ref{thm:fracMarcSurf-1-b}, by 
setting $b(t)\equiv1$ and $\gamma=\infty$.  
Using \eqref{eq:doubling} and \eqref{eq:uniform}, we apply Theorem 
\ref{thm:fracMarcSurf-1-b}(i), and obtain the conclusion of 
Theorem \ref{thm:fracMarcSurf-1}(i). 

We go to the proof of (ii). First
\begin{align}
\widehat {\sigma_t}(\xi)&=\frac{1}{t^\rho}\int_{B(t) \setminus B(t/2)}
\frac{\Omega(y')}{|y|^{n-\rho}}e^{-i\phi(|y|)y'\cdot \xi}dy
\label{eq:FTsigma-b=1}
\\
&=\int_{\Sn}\Omega(y')\frac{1}{t^\rho}
\left(\int_{t/2}^{t}
e^{-i\phi(|y|)y'\cdot \xi}r^{\rho-1}\,dr\right)\,d\sigma(y')\notag.
\end{align}
With change of variables we get
\begin{align}
B(t,\xi)
&:=\frac{1}{t^\rho}\int_{t/2}^{t}e^{-i\phi(|y|)y'\cdot \xi}r^{\rho-1}\,dr
\nonumber\\
&=\frac{1}{t^\rho}\int_{\phi(t/2)}^{\phi(t)}e^{-isy'\cdot \xi}
\frac{\phi^{-1}(s)^{\rho}}{\phi^{-1}(s)\phi'(\phi^{-1}(s))}\,ds
\label{eq:FTsigma-b=1-2}
\\
&=\frac{1}{t^\rho}\int_{\phi(t/2)}^{\phi(t)}e^{-isy'\cdot \xi}
\frac{\phi^{-1}(s)^{\rho}}{s}
\cdot
\frac{\phi(\phi^{-1}(s))}{\phi^{-1}(s)\phi'(\phi^{-1}(s))}\,ds.
\nonumber\\
&=\frac{1}{t^\rho}\int_{\phi(t/2)}^{\phi(t)}e^{-isy'\cdot \xi}
\frac{\phi^{-1}(s)^{\rho}}{s}
\cdot
\varphi(\phi^{-1}(s))\,ds.
\nonumber
\end{align}
Suppose now that $t\phi'(t)$ is increasing on $\R_+$. Then 
$\phi^{-1}(s)\phi'(\phi^{-1}(s))$ is also increasing. So by applying 
the second mean value theorem to the real part of the expression 
\eqref{eq:FTsigma-b=1-2}, we see that there exists $u$ with 
$\phi(t/2)<u<\phi(t)$ such that 
\begin{equation*}
\operatorname{Re}B(t,\xi)
=\frac{1}{t^\rho\phi^{-1}(\phi(t/2))\phi'(\phi^{-1}(\phi(t/2))}
\int_{\phi(t/2)}^{u}
{\rm Re}(e^{-isy'\cdot \xi}){\phi^{-1}(s)^{\rho}}\,ds.
\end{equation*}
Since $\phi^{-1}(s)^{\rho}$ is increasing, we have 
\begin{align*}
|\operatorname{Re}B(t,\xi)|
&\le \frac{\phi^{-1}(u)^{\rho}}
{t^\rho\phi^{-1}(\phi(t/2))\phi'(\phi^{-1}(\phi(t/2))|y'\cdot\xi|}
\\
&\le\frac{\phi^{-1}(\phi(t))^{\rho}}{t^\rho}
\cdot
\frac{\phi(\phi^{-1}(\phi(t/2)))}
{\phi^{-1}(\phi(t/2))\phi'(\phi^{-1}(\phi(t/2))}
\cdot
\frac1{\phi(t/2)|y'\cdot\xi|}
\\
&=\frac{\phi^{-1}(\phi(t))^{\rho}}{t^\rho}
\cdot
\frac{\phi(t/2)}
{t/2 \cdot \phi'(t/2)}
\cdot
\frac1{\phi(t/2)|y'\cdot\xi|}\\
&=\frac{\phi^{-1}(\phi(t))^{\rho}\varphi(t/2)}{t^\rho}
\cdot
\frac1{\phi(t/2)|y'\cdot\xi|}
\le C\frac{c_0\|\varphi\|_\infty}{\phi(t)|y'\cdot\xi|}. 
\end{align*}
After estimating $\operatorname{Im}B(t,\xi)$ 
in a similar manner, we obtain 
\begin{equation}\label{eq:FTsigma-b=1-4}
|B(t,\xi)|\le C\frac{c_0\|\varphi\|_\infty}{\phi(t)|y'\cdot\xi|}.
\end{equation}
In the case $t\phi'(t)$ is decreasing or $\varphi(t)$ is monotonic, we get the 
same estimate \eqref{eq:FTsigma-b=1-4} in a similar way. Clearly, we have 
$|B(t,\xi)|\le 1/\rho$, and hence,
for any $0<\beta\le1$,
$|B(t,\xi)|\le \dfrac{C}{(\phi(t)|y'\cdot\xi|)^\beta}.$
By \eqref{eq:FTsigma-b=1} we get
\begin{equation}
|\widehat {\sigma_t}(\xi)|
\le C\left(\int_{\Sn}\frac{|\Omega(y')}{|y'\cdot\xi'|^\beta}d\sigma(y')
\right)
\frac1{(\phi(t)|\xi|)^\beta}. 
\end{equation}
Now the rest of the proof is the same as that of the case (i). 
\par\smallskip

This completes the proof of Theorem \ref{thm:fracMarcSurf-1}.
\qed


\section{Proof of Proposition \ref{lem:Triebel-Liz1}}
\label{section:5}
 
The part is an appendix of the present paper,
where we prove Proposition \ref{lem:Triebel-Liz1}.
Let $\psi \in {\mathcal S}(\Rn)$ be chosen so that
\begin{equation*}
\chi_{B(1)} \le \psi \le \chi_{B({a^{1/3}})}.
\end{equation*}
Define
\begin{equation}\label{eq:str-1}
\varphi_k(\xi)=\psi(a_{k}^{-1}\xi)-\psi(a_{k-1}^{-1}\xi)
\quad(\xi \in \Rn).
\end{equation}

Notice that $\supp \varphi_k\subset\{\xi\in\Rn;\,a_{k-1}\le |\xi|\le 
a^{1/3}a_{k}\}$ $(k\in\Z)$ and that $\varphi_k(\xi)=1$ 
on $\{a^{1/3}a_{k-1} \le |\xi| \le a_{k}\}$. Let 
\begin{equation}\label{eq:str-2}
\Phi_k=\F^{-1}\varphi_k.
\end{equation} 
Then, we see that $\{\Phi_k\}_{k\in\Z}$ is a partition of unity adapted to 
$\{a_k\}_{k\in\Z}$. 
Similarly, taking $\psi$ so that 
\begin{equation*}
\chi_{B(a^{-1/3})} \le \psi \le \chi_{B({1})}, 
\end{equation*}
and setting 
\begin{equation*}
\varphi_k(\xi)=\psi(a_{k+1}^{-1}\xi)-\psi(a_{k}^{-1}\xi)
\quad (\xi \in \Rn),
\end{equation*}
we obtain 
another partition of unity $\{\Psi_k\}_{k\in\Z}$ adapted to 
$\{a_k\}_{k\in\Z}$ 
satisfying $\supp \widehat{\Psi_k}\subset
\{\xi\in\Rn;\,a_{k}/a^{1/3}\le |\xi|\le a_{k+1}\}$ $(k\in\Z)$ and 
$\widehat{\Psi_k}(\xi)=1$ on $\{a_{k} \le |\xi| \le a_{k+1}/a^{1/3}\}$. 
Note that $\{a_{k} \le |\xi| \le a^{2/3}a_{k}\}
\subset\{a_{k} \le |\xi| \le a_{k+1}/a^{1/3}\}$. 
Let us take a function $\Theta \in {\mathcal S}$ so that
${\supp}({\mathcal F}\Theta) \subset B(a^{1/3}/2-1/2)$.
Consider
\[
f_k(x)=f_k(x_1,x_2,\cdots,x_n)=\exp
\left(i\frac{({a^{1/3}}+a^{2/3})}{2}a_k x_1\right)\Theta(a_kx)
\quad (x \in \Rn).
\]
Then we have
\[
{\mathcal F}f_k(\xi)
=a_k{}^{-n}{\mathcal F}\Theta\left(\frac{\xi}{a_k}
-\frac{({a^{1/3}}+a^{2/3})}{2}\mathbf e_1\right)
\quad(\xi \in \Rn),
\]
where $\mathbf e_1=(1,0,\dots,0))$. 
It follows that 
$\supp{\mathcal F}f_k\subset \{a^{1/3}a_k\le|\xi|\le a^{2/3}a_k\}$. 
Hence we have 
\begin{equation*}
\Phi_{j}*f_k(x)=\delta_{(j-1)k}f_k(x)\ \text{ and }
\ \Psi_{j}*f_k(x)=\delta_{jk}f_k(x),
\end{equation*}
where
\[
\delta_{jk}
=
\begin{cases}
1&(j=k),\\
0&(j \ne k)
\end{cases}
\]
for $j,k \in \Z$.
\[
\|f_k\|_{F^{\alpha,\{\Phi_k\}_{k\in\Z}}_{pq}}
=
{a_{k+1}}^{\alpha}
\bigl\|f_k\bigr\|_{L^p(\Rn)}
=
{a_{k+1}}^{\alpha}
\bigl\|\Theta(a_k\cdot)\bigr\|_{L^p(\Rn)}
\]
and
\[
\|f_k\|_{F^{\alpha,\{\Psi_k\}_{k\in\Z}}_{pq+}}
=
{a_{k}}^{\alpha}
\bigl\|f_k\bigr\|_{L^p(\Rn)}
=
{a_{k}}^{\alpha}
\bigl\|\Theta(a_k\cdot)\bigr\|_{L^p(\Rn)}.
\]
Since the two norms are assumed equivalent,
we obtain
\[
\frac{a_{k+1}}{a_k} \le C_0
\]
for some $C_0>1$. Since $\frac{a_{k+1}}{a_k}\ge a$, we have $C_0>a$. 

Thus we have proved the first part of our proposition. 
We proceed to the second part.
 Let $\{a_k\}_{k\in\Z}$ be a lacunary sequence of 
positive numbers with $1<a\le a_{k+1}/a_{k}\le C_0$ $(k\in\Z)$, and 
$\{\Phi_k\}_{k\in\Z}$ be a partition of unity adapted to $\{a_k\}_{k\in\Z}$. 

Now we can define the classical homogeneous Triebel-Lizorkin spaces as follows:
 Let $\psi \in {\mathcal S}(\Rn)$ be chosen so that
$\chi_{B(a^{2/3})} \le \psi \le \chi_{B({a})}.$
Define
\[
\varphi_k(\xi)=\psi(a^{-k}\xi)-\psi(a^{-k+1}\xi)
\quad (\xi \in \Rn).
\]
Notice that $\varphi_k(\xi)=1$
on $\{a^{k} \le |\xi| \le a^{k+\frac23}\}$.

Define
\[
\|f\|_{\dot F^\alpha_{pq}}
=
\Bigl\|
\Bigl(\sum_{j=-\infty}^\infty
|a^{\alpha j}{\mathcal F}^{-1}\varphi_j*f|^q\Bigr)^{1/q}
\Bigr\|_{L^p}
\]

Let us prove 
\[
\|f\|_{\dot F^{\alpha,\{\Phi_k\}_{k\in\Z}}_{pq}} 
\le C \|f\|_{\dot F^\alpha_{pq}}.
\]
For each $k\in\Z$,
we choose $m_k\in\Z$ so that
\[
a^{m_k} \le a_k < a^{m_k+1}.
\]
Combining with $a^{m_k+1}\le aa_k\le a_{k+1}$, we get 
$m_{k+1}\ge m_k+1$. 
And combining with $a_{k+1}/a_k\le C_0$, 
we have $m_{k+1}-m_k\le 1+\log_a C_0$.
Furthermore we have
\[
\Phi_k=\Phi_k*\sum_{l=m_{k-1}}^{m_{k+1}+1}\varphi_l.
\]
Consequently, we obtain
\begin{align*}
\|f\|_{\dot F^{\alpha,\{\Phi_k\}_{k\in\Z}}_{pq}}
&=
\biggl\|\biggl(\sum_{k\in\Z}{a_{k}}^{\alpha q}
|\Phi_{k}*f|^q\biggr)^{1/q}\biggr\|_{L^p(\Rn)}
\\
&=\biggl\|\biggl(\sum_{k\in\Z}{a_{k}}^{\alpha q}
\biggl|\Phi_k*\sum_{l=m_{k-1}}^{m_{k+1}+1}\varphi_l*f\biggr|^q
\biggr)^{1/q}\biggr\|_{L^p(\Rn)}\\
&\le
\biggl\|\biggl(\sum_{k\in\Z}{a_{k}}^{\alpha q}
\biggl|\Phi_k*\sum_{l=m_{k-1}}^{m_{k+1}+1}\varphi_l*f\biggr|^q
\biggr)^{1/q}\biggr\|_{L^p(\Rn)}.
\end{align*}
We now invoke the Plancherel-Polya-Nikolskij inequality:
We have
\[
\left|\sum_{l=m_{k-1}}^{m_{k+1}+1}\varphi_l*f(x)\right|
\le C(1+|a^{m_{k+1}+1}(x-y)|)^n
M\left[\sum_{l=m_{k-1}}^{m_{k+1}+1}\varphi_l*f\right](y).
\]
Using Plancherel's theorem,
the assumption 
$|\xi^\beta\partial^{\beta}\widehat{\Phi_k}(\xi)|\le C_\beta$
for all $\beta$ 
and that
$\supp\widehat\Phi_k\subset \{a_{k-1}\le|\xi|\le a_{k+1}\}$, 
we get 
\begin{align*}
\lefteqn{
\int_{\Rn}(1+|a^{m_{k+1}+1}x|)^n|\Phi_k(x)|\,dx
}\\
&\le C
\int_{\Rn}(1+|a_{k+1}x|)^n|\Phi_k(x)|\,dx
\\
&\le C
\left(\int_{\Rn}(1+|a_{k+1}x|)^{4n}|\Phi_k(x)|^2\,dx\right)^{1/2}
\left(\int_{\Rn}(1+|a_{k+1}x|)^{-2n}\,dx\right)^{1/2}
\\
&\le C{a_k}^{-n/2}
\left(\int_{\Rn}
\left(
|\widehat{\Phi}_k(\xi)|^2
+a_{k+1}{}^{4n}
|{\nabla^{n}\widehat{\Phi}_k(\xi)}|^2
\right)\,d\xi\right)^{1/2}
\\
&\le C{a_k}^{-n/2}
\left(\int_{a_{k-1}\le|\xi|\le a_{k+1}}
\left(
|\widehat{\Phi}_k(\xi)|^2
+a_{k+1}{}^{4n}|\xi|^{-4n}
\right)\,d\xi\right)^{1/2}
\\
&\le C.
\end{align*}
Hence, it follows that
\begin{align*}
\|f\|_{\dot F^{\alpha,\{\Phi_k\}_{k\in\Z}}_{pq}}
&\le C
\biggl\|\biggl(\sum_{k\in\Z}{a_{k}}^{\alpha q}
M\biggl[\biggl|\sum_{l=m_{k-1}}^{m_{k+1}+1}\varphi_l*f\biggr|\biggr]^q
\biggr)^{1/q}\biggr\|_{L^p(\Rn)}.
\end{align*}
By the Fefferman-Stein vector-valued maximal inequality
(see \cite{FeSt}),
we obtain
\begin{align*}
\|f\|_{\dot F^{\alpha,\{\Phi_k\}_{k\in\Z}}_{pq}}
&\le C
\biggl\|\biggl(\sum_{k\in\Z}{a_{k}}^{\alpha q}
\biggl|\sum_{l=m_{k-1}}^{m_{k+1}+1}\varphi_l*f\biggr|^q
\biggr)^{1/q}\biggr\|_{L^p(\Rn)}.
\end{align*}
If we use
$(a_1+a_2+\cdots+a_N)^q \le N^{q}(a_1{}^q+a_2{}^q+\cdots+a_N{}^q)$,
then we obtain
\begin{align*}
\|f\|_{\dot F^{\alpha,\{\Phi_k\}_{k\in\Z}}_{pq}}
&\le C
\biggl\|\biggl(\sum_{k\in\Z}{a_{k}}^{\alpha q}(m_{k+1}-m_{k-1}+2)^{q}
\sum_{l=m_{k-1}}^{m_{k+1}+1}\left|\varphi_l*f\right|^q
\biggr)^{1/q}\biggr\|_{L^p(\Rn)}.
\end{align*}
Noting $m_{k+1}-m_k\ge 1$, 
$m_{k+1}-m_k\le 1+\log_a C_0$
and that
$a^{m_{k-1}+1}\le a^{m_k} \le a_k < a^{m_k+1}\le a^{m_{k-1}+1+[\log_aC_0]}$,
we conclude
\[
\|f\|_{\dot F^{\alpha,\{\Phi_k\}_{k\in\Z}}_{pq}}
\le C
\biggl\|\biggl(\sum_{k\in\Z}{a_{k}}^{\alpha q}
\sum_{l=m_{k-1}}^{m_{k-1}+2+[2\log_aC_0]}\left|\varphi_l*f\right|^q
\biggr)^{1/q}\biggr\|_{L^p(\Rn)}
\le C
\|f\|_{F^\alpha_{pq}}.
\]

Let us prove the reverse inequality.
For each $k\in\Z$,
we can choose $\ell_k\in\Z$ so that
\[
a_{\ell_k} \le a^{k-1} \le a^{k+1} \le a_{\ell_k+3}.
\]
Then we have
\[
\varphi_k=\varphi_k(\Phi_{\ell_k}+\Phi_{\ell_k+1}
+\Phi_{\ell_k+2}+\Phi_{\ell_k+3}).
\]
Notice that
\[
\sup_{l \in \Z}\sharp
\{k\,:\,\ell_k=l\}\le 3\log_aC_0
\]
because $a_{l+3}/a_l \le C_0{}^3$.
Thus, it follows that
\begin{align*}
\|f\|_{\dot F^\alpha_{pq}}
&=
\biggl\|
\biggl(\sum_{k=-\infty}^\infty
|a^{\alpha k}{\mathcal F}^{-1}\varphi_k*f|^q\biggr)^{1/q}
\biggr\|_{L^p}\\
&=
\biggl\|
\biggl(\sum_{k=-\infty}^\infty
|a^{\alpha k}{\mathcal F}^{-1}\varphi_k*({\mathcal F}^{-1}\Phi_{\ell_k}+
\cdots+
{\mathcal F}^{-1}\Phi_{\ell_k+3})*f|^q\biggr)^{1/q}
\biggr\|_{L^p}\\
&\le C
\biggl\|
\biggl(\sum_{k=-\infty}^\infty
M[|a^{\alpha k}({\mathcal F}^{-1}\Phi_{\ell_k}+
\cdots+
{\mathcal F}^{-1}\Phi_{\ell_k+3})*f|]^q\biggr)^{1/q}
\biggr\|_{L^p}.
\end{align*}
Again by the Fefferman-Stein vector-valued maximal inequality
(see \cite{FeSt}),
we obtain
\begin{align*}
\|f\|_{\dot F^\alpha_{pq}}
&\le C
\biggl\|
\biggl(\sum_{k=-\infty}^\infty
|a^{\alpha k}({\mathcal F}^{-1}\Phi_{\ell_k}+
\cdots+
{\mathcal F}^{-1}\Phi_{\ell_k+3})*f|^q\biggr)^{1/q}
\biggr\|_{L^p}\\
&\le C
\biggl\|
\biggl(\sum_{k=-\infty}^\infty
\biggl(
|a_{\ell_k}{}^{\alpha }{\mathcal F}^{-1}\Phi_{\ell_k}*f|
+
\cdots
+
|a_{\ell_k+3}{}^{\alpha}{\mathcal F}^{-1}\Phi_{\ell_k+3}*f|
\biggr)^q\biggr)^{1/q}
\biggr\|_{L^p}\\
&\le C
\|f\|_{\dot F^{\alpha,\{\Phi_k\}_{k\in\Z}}_{pq}}.
\end{align*}
This completes the proof of our proposition.

 
\par\vspace{1cm}


\end{document}